\newcommand*\circled[1]{\tikz[baseline=(char.base)]{
            \node[shape=circle,draw,inner sep=2pt] (char) {#1};}}
\DeclareMathOperator{\HS}{\mathrm{HS}}
\DeclareMathOperator{\HF}{\mathrm{HF}}
\newtheorem{theorem}{Theorem}[section]
\newtheorem{corollary}[theorem]{Corollary}
\newtheorem{lemma}[theorem]{Lemma}
\newtheorem{conjecture}[theorem]{Conjecture}
\newtheorem{proposition}[theorem]{Proposition}
\theoremstyle{definition}
\newtheorem{definition}[theorem]{Definition}
\newtheorem{example}[theorem]{Example}
\author{Filip Jonsson Kling}
\title[The SLP for Quadratic Reverse Lexicographic Ideals]{The Strong Lefschetz Property for Quadratic Reverse Lexicographic Ideals}
\address{Department of Mathematics\\ Stockholm University\\ SE-106 91 Stockholm, Sweden}
\email{filip.jonsson.kling@math.su.se}
\thanks{2020 \emph{Mathematics Subject Classification.} 13E10; 13D40; 13F55.\\\indent 
\emph{Key words and phrases.} Strong Lefschetz property, Hilbert series, reverse lexicographic order, log-concave, monomial ideal}
\begin{document}

\maketitle

\begin{abstract}
Consider ideals $I$ of the form
\[
I=(x_1^2,\dots, x_n^2)+\mathrm{RLex}(x_ix_j)
\]
where $\mathrm{RLex}(x_ix_j)$ is the ideal generated by all the square-free monomials which are greater than or equal to $x_ix_j$ in the reverse lexicographic order. We will determine some interesting properties regarding the shape of the Hilbert series of $I$. Using a theorem of Lindsey \cite{Lindsey}, this allows for a short proof that any algebra defined by $I$ has the strong Lefschetz property when the underlying field is of characteristic zero. Building on recent work by Phuong and Tran \cite{New_Stanley}, this result is then extended to fields of sufficiently high positive characteristic. As a consequence, this shows that for any possible number of minimal generators for an artinian quadratic ideal there exists such an ideal minimally generated by that many monomials and defining an algebra with the strong Lefschetz property.
\end{abstract}

\section{Introduction}
Let $R=k[x_1,\dots, x_n]$ for some field $k$ and $A=R/I$ be a standard graded artinian algebra. A common question regarding such algebras has been in which cases they enjoy the weak or strong Lefschetz property (WLP or SLP, respectively). See for example the survey article \cite{Survey}.
Recall that $A$ satisfies the SLP if there exists a linear form $\ell\in A_1$ such that the multiplication by powers of that linear form
\[
\cdot \ell^i:A_j \to A_{j+i}
\]
has full rank for all $i,j\geq 0$. If we only require it to hold for $i=1$ and all $j\geq 0$, we say it satisfies the WLP. A classical result of this type is the case when $k$ is a field of characteristic zero and $A=R/I$ is defined by a monomial complete intersection, $I=(x_1^{d_1},\dots, x_n^{d_n})$. This was first shown independently by Stanley \cite{Stanley} and  Watanabe \cite{Watanabe} to have the SLP, with later proofs given by Reid, Roberts and Roitman \cite{RRR}, Hara and Watanabe \cite{Watanabe_Boolean}, Lindsey \cite{Lindsey}, and Phuong and Tran \cite{New_Stanley}. We will here use techniques from the two most recent proofs in \cite{Lindsey} and \cite{New_Stanley} to establish the SLP for a new family of algebras defined by quadratic ideals. In particular, this will show that for any possible number of minimal generators of an artinian quadratic ideal, there is a monomial ideal $I$ with that many generators defining an algebra with the SLP. This can be seen as a partial converse to a classification result of Altafi and Lundqvist \cite{Forcing_WLP}: instead of asking what number of generators of an artinian algebra forces it to have the WLP, we ask what number of generators forces it to \emph{fail} the WLP, and show that no such number exists in the quadratic case. This complements other recent work on the Lefschetz properties for ideals defined by quadratic monomials such as \cite{Dao_Nair}, \cite{WLP_for_quadratic} and \cite{Tran_WLP_paths+cycles}. 

The algebras in question are defined by ideals of the form 
\[
I=(x_1^2,\dots, x_n^2)+\mathrm{RLex}(x_ix_j)
\]
for $1\leq i<j\leq n$ where $\mathrm{RLex}(x_ix_j)$ is the ideal generated by all the square-free monomials which are greater than or equal to $x_ix_j$ in the reverse lexicographic order with $x_1>\cdots >x_n$. Recall that this order means that $x_1^{a_1}\cdots x_n^{a_n}>x_1^{b_1}\cdots x_n^{b_n}$ for two monomials of the same degree if the last non-zero entry of the vector $(a_1,\dots, a_n)-(b_1,\dots, b_n)$ is negative. Our main result is that $A=R/I$ always satisfies the SLP under some restrictions on the characteristic of $k$. We will also determine some properties of the Hilbert series of $A$ that are of independent interest.

\begin{example}
\label{ex:x_2x_4}
In $R=k[x_1,x_2,x_3,x_4]$, we have that 
\[
\mathrm{RLex}(x_2x_4)=(x_1x_2, x_1x_3, x_2x_3, x_1x_4, x_2x_4)
\] and we claim that for the ideal $I=(x_1^2, x_2^2,x_3^2,x_4^2)+ \mathrm{RLex}(x_2x_4)$, $R/I$ does have the SLP.    
\end{example}

One way to think about these ideals is to consider the graph associated to it, namely the graph $G$ such that $\mathrm{RLex}(x_ix_j)$ is the edge ideal of $G$. There we see that each $\mathrm{RLex}(x_{j-1}x_j)$ corresponds to the complete graph on $j$ vertices, and in general $\mathrm{RLex}(x_ix_j)$ is the complete graph on $j-1$ vertices together with $i$ edges connecting an additional vertex to the first $i$ vertices of that complete graph.  

\begin{figure}[h]
\begin{center}
\begin{tikzcd}
\circled{3} \arrow[d, no head] \arrow[r, no head] & \circled{1} \arrow[ld, no head] \arrow[d, no head] \\
\circled{2}                                       & \circled{4} \arrow[l, no head]                    
\end{tikzcd}
\caption{A graph with edge ideal given by $\mathrm{RLex}(x_2x_4)$.}
\end{center}
\label{fig:24}
\end{figure}
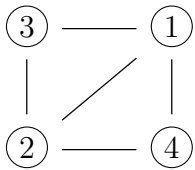

The structure for the rest of paper is that we begin by exploring properties of the Hilbert series of our algebras $A$, allowing for a short proof that they do have the SLP in characteristic zero. Next, the matrices representing multiplication by a Lefschetz element on $A$ are studied and the SLP is established when the underlying field is of sufficiently high characteristic. This will use our earlier obtained properties of the Hilbert series in a crucial way. Finally, we make some comments and a conjecture on what happens in higher degrees.

\section{The shape of its Hilbert series}

Let us begin by recalling the definition of a Hilbert series.

\begin{definition}
Let $A=\oplus_{i=0}^{\infty}A_i$ be a standard graded $k$-algebra. The \emph{Hilbert function} of $A$ is given by $\mathrm{HF}(A,i)=\dim_k(A_i)$. The \emph{Hilbert series} of $A$ is then
\[
\mathrm{HS}(A,t)=\sum_{i=0}^{\infty}\mathrm{HF}(A,i)t^i.
\]
We will also use the convention that $\mathrm{HF}(A,i)=\dim_k(A_i)=0$ for $i<0$.
\end{definition}

With this, we can already establish everything for our algebras in the simplest case, when $j=n$. 

\begin{lemma}
\label{lem:j=n case}
Let $I=(x_1^2,\dots, x_n^2) + \mathrm{RLex}(x_ix_n)$. Then $A=R/I$ has the Hilbert series $1+nt + (n-i-1)t^2$ and the SLP.
\end{lemma}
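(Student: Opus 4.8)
The plan is to pin down the degree-two part of $I$ by first working out the reverse lexicographic order on squarefree quadrics, read off the Hilbert function directly, and then verify the short list of rank conditions that the SLP reduces to for an algebra concentrated in degrees $0,1,2$.

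\emph{Hilbert series.} For squarefree quadrics $x_ax_b$ and $x_cx_d$ with $a<b$ and $c<d$, one checks straight from the definition that $x_ax_b\ge x_cx_d$ in the reverse lexicographic order exactly when $b<d$, or when $b=d$ and $a\le c$. Hence $\mathrm{RLex}(x_ix_n)$ is generated by all $x_ax_b$ with $b\le n-1$ together with $x_1x_n,\dots,x_ix_n$, so modulo $(x_1^2,\dots,x_n^2)$ it kills every degree-two monomial except $x_{i+1}x_n,\dots,x_{n-1}x_n$. Since $I$ has no linear part, $A_0=k$ and $A_1$ has basis $x_1,\dots,x_n$; by the previous sentence $A_2$ has basis $x_{i+1}x_n,\dots,x_{n-1}x_n$, so $\dim_k A_2 = n-i-1$; and $A_j=0$ for $j\ge 3$, since a squarefree cubic divisible by $x_n$ is also divisible by some quadric $x_ax_b$ with $b\le n-1$ or by $x_n^2$ (and a cubic not divisible by $x_n$ lies in $I$ for the same reason), hence $A_1A_2=0$. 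This is the claimed Hilbert series $1+nt+(n-i-1)t^2$.

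\emph{SLP.} Because $A$ is supported in degrees $0,1,2$, the maps $\cdot\ell^d\colon A_a\to A_{a+d}$ that require an argument are only $\cdot\ell\colon A_1\to A_2$ and $\cdot\ell^2\colon A_0\to A_2$; the first must be surjective and the second injective (the latter vacuous when $i=n-1$, where $A_2=0$), while for any $\ell\ne0$ the maps out of $A_0$ in degree $\le1$ are fine and all maps into the zero spaces $A_j$, $j\ge3$, are trivial. Take $\ell=x_1+\dots+x_n$. For $i+1\le a\le n-1$ one gets $x_a\ell\equiv x_ax_n$ in $A$ (every other term of $x_a\ell$ lies in $I$), and these span $A_2$, so $\cdot\ell\colon A_1\to A_2$ is onto; and $\ell^2\equiv 2\bigl(x_{i+1}x_n+\dots+x_{n-1}x_n\bigr)$ in $A$, which is nonzero in $A_2$ whenever $A_2\ne0$. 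Hence $A$ has the SLP.

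The argument is elementary throughout; the only place anything delicate enters is that $\ell^2$ collapses to a scalar multiple of $x_{i+1}x_n+\dots+x_{n-1}x_n$, so one needs $\mathrm{char}\,k\ne2$. This is no loss in the present section, where the short characteristic-zero proof is the goal, and the restriction is genuinely necessary: in characteristic $2$ every square of a linear form already lies in $(x_1^2,\dots,x_n^2)\subseteq I$, so no Lefschetz element can exist once $A_2\ne0$. Beyond that, the only ``obstacle'' is the bookkeeping of the reverse lexicographic comparison of quadrics, making sure exactly $x_{i+1}x_n,\dots,x_{n-1}x_n$ survive.
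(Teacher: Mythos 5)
Your proof is correct and follows essentially the same route as the paper: identify the monomial basis ($x_1,\dots,x_n$ in degree one, $x_{i+1}x_n,\dots,x_{n-1}x_n$ in degree two, nothing above), read off the Hilbert series, and check the two nontrivial full-rank conditions for $\ell=x_1+\cdots+x_n$, with $x_a\ell\equiv x_ax_n$ for $i+1\le a\le n-1$ giving surjectivity of $\cdot\ell\colon A_1\to A_2$. You are more explicit than the paper about the reverse lexicographic bookkeeping, and that part is right. Your characteristic~$2$ caveat is also a genuine and correct observation that the paper overlooks: since $\ell^2=\sum_k c_k^2x_k^2\in I$ for every linear form in characteristic $2$, the map $\cdot\ell^2\colon A_0\to A_2$ is zero, so the SLP fails whenever $i\le n-2$; this contradicts the paper's remark that the algebras of this lemma have the SLP over any field (though it does not affect the main theorems, which assume characteristic zero or characteristic greater than the socle degree).
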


\begin{proof}
The Hilbert series for $A$ follows as the algebra is spanned by $x_1,\dots, x_n$ in degree $1$ and by $x_{i+1}x_n,\dots, x_{n-1}x_n$ in degree $2$ while it has nothing in higher degrees. It can then be checked that this has the SLP as the maps $\cdot\ell^i:A_0\to A_i$ always have full rank for $\ell=x_1+\cdots + x_n$ and $\cdot \ell:A_1\to A_2$ is surjective since $x_j\ell$ for $j=i+1,\dots, n-1$ will hit all basis elements in degree two.   
\end{proof}

Note that unless otherwise stated, all results in this section are independent of the characteristic of the underlying field. In particular, the algebras considered in Lemma \ref{lem:j=n case} have the SLP over any field. The case when $1\leq i <j<n$ is a bit trickier when it comes to determining if our algebras have the SLP. So before we can get to that, let us determine the Hilbert series of $A$ and find some of its properties.

\begin{theorem}
\label{the:Hilbert series}
The Hilbert series of $A=R/I$ where $I=(x_1^2,\dots, x_n^2)+\mathrm{RLex}(x_ix_j)$ is given by
\[
\HS(R/I;t) = (1+t)^{n-j}(1 + jt + (j-i-1)t^2).
\]
\end{theorem}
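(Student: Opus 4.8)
The plan is to compute the Hilbert function of $A = R/I$ directly by counting a monomial basis, degree by degree, exploiting the special structure of the generators. First I would observe that $I$ contains all the squares $x_1^2,\dots,x_n^2$, so any monomial surviving in $A$ is square-free; hence a basis for $A$ in each degree consists of square-free monomials not lying in $\mathrm{RLex}(x_ix_j)$. The key structural fact is that $\mathrm{RLex}(x_ix_j)$ is generated by all square-free monomials of degree two that are $\geq x_ix_j$ in the reverse lexicographic order, and by the description in terms of the associated graph $G$, the complement graph has a very simple form: the variables $x_{j+1},\dots,x_n$ are isolated vertices (no edge of $G$ touches them), the variables $x_1,\dots,x_{i}$ together with $x_j$ form (after removing $G$-edges) an independent-type structure, and $x_{i+1},\dots,x_{j-1}$ together with $x_j$ contribute the remaining non-edges. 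So I would make this precise: a square-free monomial is nonzero in $A$ iff no two of its variables span an edge of $G$, i.e. its support is an independent set of $G$.

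Next I would split the variable set as $\{x_1,\dots,x_j\} \sqcup \{x_{j+1},\dots,x_n\}$. Since the last $n-j$ variables are isolated in $G$, they can be appended freely to any independent set supported on the first $j$ variables; this immediately factors the Hilbert series as
\[
\HS(A;t) = (1+t)^{n-j}\cdot \HS(B;t),
\]
where $B = k[x_1,\dots,x_j]/\bigl((x_1^2,\dots,x_j^2)+\mathrm{RLex}(x_ix_j)\bigr)$ is the "$n=j$'' case. It then remains to show $\HS(B;t) = 1 + jt + (j-i-1)t^2$, i.e.\ that $B$ vanishes in degrees $\geq 3$ and has the stated dimensions in degrees $0,1,2$. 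Degrees $0$ and $1$ are immediate ($1$ and $j$). For degree $2$: the square-free degree-two monomials on $x_1,\dots,x_j$ number $\binom{j}{2}$, and I must subtract those that lie in $\mathrm{RLex}(x_ix_j)$, namely the monomials $\geq x_ix_j$; a direct count using the reverse lexicographic order (the monomials of degree two in $x_1,\dots,x_j$ that are $\geq x_ix_j$ are exactly $x_ax_b$ with $b<j$, together with $x_1x_j,\dots,x_ix_j$) shows the survivors are precisely $x_{i+1}x_j,\dots,x_{j-1}x_j$, giving $j-i-1$. For degrees $\geq 3$: any square-free monomial of degree $3$ on $x_1,\dots,x_j$ has support of size $3$ inside $\{x_1,\dots,x_j\}$, and I would check that such a support always contains a non-edge of the complement — equivalently contains two variables $x_ax_b$ with $x_ax_b \geq x_ix_j$ — because at most one of the three variables can be "large'' enough to pair only with $x_j$ in a surviving way; this kills all degree-$3$ monomials, hence all higher ones.

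The main obstacle, and the step deserving the most care, is the combinatorial bookkeeping in the last paragraph: identifying exactly which degree-two square-free monomials on $x_1,\dots,x_j$ are $\geq x_ix_j$ in the reverse lexicographic order, and then showing no degree-three square-free monomial avoids all of them. This is where I would be explicit: writing a degree-two monomial as $x_ax_b$ with $a<b\le j$, it is $\geq x_ix_j$ exactly when $b<j$, or $b=j$ and $a\le i$; so the complement of $G$ restricted to $\{x_1,\dots,x_j\}$ is the star with center $x_j$ and leaves $x_{i+1},\dots,x_{j-1}$, which has no triangle (indeed no path of length two avoiding the center). Consequently every $3$-subset of $\{x_1,\dots,x_j\}$ contains an edge of $G$, so $B_d=0$ for $d\ge 3$, completing the computation and hence the theorem.
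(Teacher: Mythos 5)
Your proof is correct, and it takes a genuinely different route from the paper. The paper proves the factorization $\HS(R/I;t)=(1+t)^{n-j}\HS(\bar R/\bar I;t)$ by an inductive short exact sequence argument: since no generator of $\mathrm{RLex}(x_ix_j)$ involves $x_n$ when $j<n$, one has $(I:x_n)=I+(x_n)$, the sequence $0\to R/(I:x_n)\to R/I\to R/(I+(x_n))\to 0$ splits off a factor of $(1+t)$, and after $n-j$ iterations one lands in the base case $j=n$, which is handled by the same kind of direct count you perform. You instead obtain the factor $(1+t)^{n-j}$ in one step by observing that $x_{j+1},\dots,x_n$ are isolated vertices of the graph $G$, so the algebra decomposes (as a vector space, indeed as a tensor product with $k[x_{j+1},\dots,x_n]/(x_{j+1}^2,\dots,x_n^2)$), and you then count independent sets of $G$ restricted to $\{x_1,\dots,x_j\}$ directly. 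Your combinatorial bookkeeping is right: $x_ax_b\ge x_ix_j$ in revlex exactly when $b<j$, or $b=j$ and $a\le i$, so the complement of $G$ on $\{x_1,\dots,x_j\}$ is the star on $x_j$ with leaves $x_{i+1},\dots,x_{j-1}$, giving $j-i-1$ survivors in degree two and, since a star is triangle-free, nothing in degree three or higher. What the paper's approach buys is that the colon-ideal computation requires no analysis of the revlex order beyond ``no generator is divisible by $x_n$''; what yours buys is a completely explicit monomial basis in every degree, which makes the vanishing in degrees $\ge 3$ and the symmetry/log-concavity statements that follow entirely transparent.
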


\begin{proof}
When $j=n$, we know that it holds from Lemma \ref{lem:j=n case}. So assume that $j<n$. Consider the following short exact sequence 
\[
0 \to R/(I:x_n) \xrightarrow{\cdot x_n} R/I \to R/(I+(x_n)) \to 0.
\]
As no generator of $\mathrm{RLex}(x_ix_j)$ is divisible by $x_n$, we find that $(I:x_n)=I+(x_n)=(x_1^2,\dots, x_{n-1}^2, x_n) + \mathrm{RLex}(x_ix_j)$. Therefore we get that 
\[
\HS(R/I;t) = (1+t)\HS(\bar{R}/\bar{I};t)
\]
where $\bar{R}=k[x_1,\dots, x_{n-1}]$ and $\bar{I}=(x_1^2,\dots, x_{n-1}^2) + \mathrm{RLex}(x_ix_j)$. If $j<n-1$, we apply the same argument again and after a total of $n-j$ steps we reach a polynomial ring with $j$ variables. But in that case we know that the Hilbert series equals $1 + jt + (j-i-1)t^2$ from Lemma \ref{lem:j=n case}, hence we find that
\[
\HS(R/I;t) = (1+t)^{n-j}(1 + jt + (j-i-1)t^2)
\]
as desired.
\end{proof}

\begin{corollary}
The Hilbert series $\HS(A;t)$ for $A$ as in Theorem \ref{the:Hilbert series} is a real-rooted polynomial. In particular, the coefficients form a unimodal and log-concave sequence \cite{Petter_Handbook}. Moreover, it is symmetric if and only if $j-i-1=1$.
\end{corollary}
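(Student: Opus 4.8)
The plan is to exploit the explicit factorization from Theorem~\ref{the:Hilbert series} together with the elementary fact that a product of real-rooted polynomials is again real-rooted. First I would note that $(1+t)^{n-j}$ is real-rooted, its only root being $-1$. It then remains to see that the other factor $q(t) = 1 + jt + (j-i-1)t^2$ is real-rooted. If $i = j-1$ then $j-i-1 = 0$ and $q(t) = 1+jt$ is linear, hence real-rooted. Otherwise $q$ is a genuine quadratic with discriminant $j^2 - 4(j-i-1) = (j-2)^2 + 4i$, which is strictly positive since $i \geq 1$. As the roots of a product are the union of the roots of its factors, $\HS(A;t) = (1+t)^{n-j}q(t)$ is real-rooted.

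For unimodality and log-concavity, I would observe that the coefficients of $\HS(A;t)$ are nonnegative (because $j \geq 1$ and $j-i-1 \geq 0$, the latter since $i < j$) and that its support is an interval $\{0,1,\dots,\deg\}$, as neither factor has an internal zero. A real-rooted polynomial with nonnegative coefficients has a log-concave coefficient sequence by Newton's inequalities, and a log-concave sequence of nonnegative reals with no internal zeros is unimodal; this is precisely the content of \cite{Petter_Handbook} invoked in the statement.

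Finally, for the symmetry claim, write $\tilde p(t) = t^{\deg p}p(1/t)$ for the reversal of a polynomial with nonzero constant term. Reversal is multiplicative on such polynomials, and $(1+t)^{n-j}$ is its own reversal, so $\HS(A;t)$ is palindromic if and only if $q$ is. When $j-i-1 \geq 1$, $q$ has degree $2$ and is palindromic exactly when its constant and leading coefficients coincide, i.e. $1 = j-i-1$. When $j-i-1 = 0$, $q = 1+jt$ is palindromic only if $j=1$, which is impossible since $j > i \geq 1$, and here $j-i-1 = 0 \neq 1$, so the equivalence again holds. I do not expect a real obstacle; the only points needing slight care are the degenerate case $i = j-1$, where the quadratic factor collapses to a linear one, and the cancellation used for the ``only if'' direction of the symmetry statement, both handled above.
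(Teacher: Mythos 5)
Your proposal is correct and follows essentially the same route as the paper: real-rootedness via the factorization and the discriminant computation $(j-2)^2+4i>0$, log-concavity and unimodality from Newton's inequalities, and symmetry by comparing the constant and leading coefficients (your reversal-multiplicativity phrasing is just a tidier packaging of the paper's direct inspection of the outermost coefficients). Your explicit treatment of the degenerate linear case $j-i-1=0$ is a small point of extra care that the paper glosses over.
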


\begin{proof}
We know that $(1+t)^{n-j}$ is real-rooted. To check that $1 + jt + (j-i-1)t^2$ is real-rooted, we look at its discriminant
\[
j^2 - 4(j-i-1) = (j-2)^2 + 4i > 0,
\]
which is always strictly positive, so $1 + jt + (j-i-1)t^2$ is real-rooted. Hence $\HS(A;t)$ is real-rooted as well. Next, if $j-i-1=1$, then $\HS(A;t)$ is a product of two symmetric polynomials and therefore also symmetric. If $j-i-1\neq1$, then we are in one of two cases. If $j-i-1>1$, then symmetry is broken already at the outermost coefficients. Similarly, if $j-i-1=0$, then again symmetry fails at the outermost coefficients as we assume that $j>i\geq 1$.
\end{proof}

We remark that the above results about Hilbert series for this kind of algebras also extend to quadratic monomial complete intersections if we interpret them as corresponding to the case when $j=2$ and $i=0$. The log-concavity result also adds to recent studies on the log-concavity of some families of Hilbert series by Iarrobino \cite{Iarrobino_log} and Zanello \cite{Zanello_log}. \\

The following is a key property that we later will see the coefficients of $\HS(R/I;t)$ satisfy.

\begin{definition}
Fix two indices $j<i$ with at least one of them in the interval $[0,n]$. A sequence of positive integers $(a_k)_{k=0}^n$ is \emph{mid-heavy} if for any such $i,j$, we have that $a_j\leq a_i$ implies $a_{j-1}\leq a_{i+1}$, and $a_j\geq a_i$ implies $a_{j-1}\geq a_{i+1}$. If $k<0$ or $k>n$, then the inequalities should be read as having $a_k=0$ for those $k$. 
\end{definition}

One may thus see the property of being mid-heavy not as a property of the finite sequence $(a_k)_{k=0}^n$, but rather as a property of a finitely supported sequence $(a_k)_{k\in \mathbb{Z}}$. This is related to the following class of Hilbert functions introduced by Lindsey \cite{Lindsey}.

\begin{definition}
Let $D$ be the socle degree of an artinian algebra $A$, that is, the highest degree $s$ for which $\dim_k(A_s)\neq 0$. Then the Hilbert series $\sum_ih_it^i$ of $A$ is in the class $\mathcal{H}$ if it satisfies 
\begin{equation}
\label{eq:First}
h_{i-1}\leq h_{D-i}\leq h_i \text{ for all } 1\leq i \leq \left\lfloor \frac{D}{2} \right\rfloor \text{ or} 
\end{equation}
\begin{equation}
\label{eq:second}
h_{D-i+1}\leq h_{i}\leq h_{D-i} \text{ for all } 1\leq i \leq \left\lfloor \frac{D}{2} \right\rfloor.
\end{equation}
\end{definition}

\begin{lemma}
\label{mid_to_H}
If the coefficients $h_i$ of a Hilbert series $\sum_ih_it^i$ form a mid-heavy sequence, then the same Hilbert series is in the class $\mathcal{H}$.    
\end{lemma}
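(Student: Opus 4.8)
The plan is to unpack both definitions and show that ``mid-heavy'' delivers one of the two chains \eqref{eq:First} or \eqref{eq:second} directly, by choosing the right pair of indices to feed into the mid-heavy hypothesis. First I would note that the class $\mathcal{H}$ condition is really a statement comparing, for each $1\leq i\leq \lfloor D/2\rfloor$, the three numbers $h_{i-1}, h_{D-i+1}, h_i, h_{D-i}$ — observe that $i-1$ and $D-i$ are ``one step further out'' than $i$ and $D-i+1$ in the sense of the mid-heavy definition, since $(i-1)+(D-i) = D-1 = (D-i+1)+(i) - 2$... wait, I should instead pair indices that are symmetric about the \emph{same} center. The correct bookkeeping: set $j = i$ and the other index to $D-i$ (so $j\le D-i$ when $i\le \lfloor D/2\rfloor$); then $j-1 = i-1$ and the ``$i+1$'' of the definition becomes $D-i+1$. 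So the mid-heavy dichotomy at this pair says exactly: $h_i \le h_{D-i} \Rightarrow h_{i-1}\le h_{D-i+1}$, and $h_i\ge h_{D-i}\Rightarrow h_{i-1}\ge h_{D-i+1}$.

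Next I would run the following trichotomy on the ``innermost'' comparison. Because the socle degree is $D$, we have $h_D\neq 0$ and (by the mid-heavy positivity convention together with artinianness) $h_0 = 1$. Compare $h_{\lfloor D/2\rfloor}$ with $h_{\lceil D/2\rceil} = h_{D-\lfloor D/2\rfloor}$. If these are equal, then iterating the mid-heavy implication outward (each step: from $h_i \le h_{D-i}$ and $h_i\ge h_{D-i}$ both holding, conclude $h_{i-1}\le h_{D-i+1}$ and $h_{i-1}\ge h_{D-i+1}$, i.e. $h_{i-1} = h_{D-i+1}$) forces $h_{i-1} = h_{D-i+1}$ for all $i$, and then both \eqref{eq:First} and \eqref{eq:second} hold trivially. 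If instead $h_{\lfloor D/2\rfloor} < h_{\lceil D/2\rceil}$ at the innermost step (interpreting $D$ even or odd appropriately; for $D$ even the innermost comparison is $h_{D/2}$ against itself, which is the equality case, so a strict inequality first appears at some larger pair), I would propagate the relevant mid-heavy implication outward step by step to get $h_{i-1}\le h_{D-i+1}$ for every $i$ up to $\lfloor D/2\rfloor$; combined with $h_{D-i}\le h_i$ or $h_i \le h_{D-i}$ obtained the same way, this is precisely chain \eqref{eq:First}. Symmetrically, if the innermost strict inequality goes the other way, one lands in \eqref{eq:second}.

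The cleanest way to organize this is: let $i_0$ be the largest index $\le \lfloor D/2\rfloor$ with $h_{i_0} = h_{D-i_0}$ (such an index exists when $D$ is even, taking $i_0 = D/2$; when $D$ is odd set $i_0 = \lceil D/2 \rceil$ after checking $h_{\lceil D/2\rceil}$ vs $h_{\lfloor D/2\rfloor}$ and handling that base comparison by hand). For $i \le i_0$ the mid-heavy implications push equality outward; for $i$ between $i_0$ and $\lfloor D/2\rfloor$, the sign of $h_i - h_{D-i}$ is constant — here I would argue that once $h_i \le h_{D-i}$ holds at one level it holds at the next level out via mid-heaviness applied to the pair $(i, D-i+1)$ shifted, so the sign cannot flip — and that constant sign selects which of \eqref{eq:First}, \eqref{eq:second} we are in.

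The main obstacle I expect is the careful index bookkeeping: matching the ``$j<i$, one of them in $[0,n]$, read out-of-range terms as $0$'' convention of mid-heaviness to the $\lfloor D/2\rfloor$-indexed chains of class $\mathcal{H}$, and in particular handling the parity of $D$ and the degenerate outer terms $h_{-1} = h_{D+1} = 0$ cleanly so that the implications still fire at the boundary. Once the right pairing $\{j = i,\ \text{other} = D-i\}$ is identified and the monotone-propagation-outward lemma is stated, the rest is a short induction.
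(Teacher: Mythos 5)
Your outward propagation along the pairs $(i,\,D-i)$ symmetric about $D/2$ is the right first move and matches the first half of the paper's argument: starting from the central comparison of $h_{\lfloor D/2\rfloor}$ with $h_{D-\lfloor D/2\rfloor}$ and iterating the mid-heavy implication gives a consistent sign for $h_i - h_{D-i}$ for all $i\le \lfloor D/2\rfloor$, and that sign selects which of \eqref{eq:First}, \eqref{eq:second} to aim for. But this only produces \emph{one} of the two inequalities in each chain, namely $h_{D-i}\le h_i$ (resp.\ $h_i\le h_{D-i}$). The other inequality in \eqref{eq:First} is $h_{i-1}\le h_{D-i}$, whose indices sum to $D-1$, not $D$; likewise $h_{D-i+1}\le h_i$ in \eqref{eq:second} has indices summing to $D+1$. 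You actually notice this mismatch of centers early on (``wait, I should instead pair indices that are symmetric about the same center'') but then only ever work with the center $D/2$. Your claim that $h_{i-1}\le h_{D-i+1}$ together with $h_{D-i}\le h_i$ ``is precisely chain \eqref{eq:First}'' is false: those two families are both statements about pairs summing to $D$ (indeed, taken together for all $i$ they force $h_i=h_{D-i}$ throughout), and neither implies the cross-comparison $h_{i-1}\le h_{D-i}$.

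The missing ingredient cannot be obtained by propagating outward from a known central comparison, because for the pairs summing to $D-1$ there is no innermost comparison whose sign you know a priori. The paper instead argues by contradiction: if $h_{i-1}\ge h_{D-i}$ failed to go the right way for some $i$, i.e.\ $h_{i-1}\ge h_{D-i}$ in the situation where \eqref{eq:First} is the target, then repeated application of the mid-heavy property to this pair (which also pushes outward, preserving the sum $D-1$) eventually yields $h_{-1}\ge h_D$, contradicting $h_{-1}=0<h_D$. The symmetric case for \eqref{eq:second} uses $h_0>0=h_{D+1}$. Your proposal gestures at ``handling the degenerate outer terms $h_{-1}=h_{D+1}=0$'' as bookkeeping, but these boundary values are not a technicality --- they are the entire source of the second inequality, via a separate contradiction argument that your plan does not contain. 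As written, the proposal has a genuine gap.
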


\begin{proof}
Assume that 
\begin{equation}
\label{eq:start}
h_{D-\left\lfloor\frac{D}{2}\right\rfloor}\leq h_{\left\lfloor\frac{D}{2}\right\rfloor}.
\end{equation} 
We want to show that the sequence satisfies $\eqref{eq:First}$. If $D$ is odd, then the property of mid-heavy sequences gives that 
\begin{equation}
\label{eq:After_Start}
h_{D-\left(\left\lfloor\frac{D}{2}\right\rfloor-1\right)}\leq h_{\left\lfloor\frac{D}{2}\right\rfloor-1}.
\end{equation}
If $D$ is even, then \eqref{eq:start} is always true as $D-\left\lfloor\frac{D}{2}\right\rfloor = \left\lfloor\frac{D}{2}\right\rfloor$, and we take \eqref{eq:After_Start} as our starting point instead.
From here we can then independently of $D$ being even or odd use the defining property of mid-heavy sequences again and again to get that
\[
h_{D-i}\leq h_i \text{ for all } i=0,1,\dots, \left\lfloor\frac{D}{2}\right\rfloor.
\]
This gives one part of the inequality in \eqref{eq:First} that we need. To get the other part of the inequality, assume for a contradiction that $h_{i-1}\geq h_{D-i}$ for some $1\leq i \leq \left\lfloor\frac{D}{2}\right\rfloor$. Then the mid-heavy property gives that 
\[
h_{i-2} \geq h_{D-(i-1)},
\]
and a total of $i-1$ more applications of the property gives that 
\[
h_{-1}\geq h_D.
\]
But $h_{-1}=\dim_k(A_{-1})=0$ while $h_{D}>0$, giving the desired contradiction. The case for when $h_{D-\left\lfloor\frac{D}{2}\right\rfloor}\geq h_{\left\lfloor\frac{D}{2}\right\rfloor}$ is done similarly but establishing $\eqref{eq:second}$ instead. Hence the Hilbert series is in the class $\mathcal{H}$.
\end{proof}

Note that the reverse implication does not hold in general. For example $k[x]/(x^3)$ has the Hilbert series $1 + t + t^2$, so $h_0=h_1=h_2=1$, and it lies in the class $\mathcal{H}$, but $h_0,h_1,h_2$ is not a mid-heavy sequence as $h_0\geq h_1$ but $0=h_{-1}<h_2=1$.

The reason we introduced the class $\mathcal{H}$ is so we can use the following theorem of Lindsey \cite[Theorem 3.10]{Lindsey}.

\begin{theorem}
\label{thm:Lindsey}
Let $k$ be a field of characteristic zero and $\ell\in R_1$ a strong Lefschetz element for a graded $R$-module $M$. Then the Hilbert series of $M$ is in the class $\mathcal{H}$ if and only if $\ell + y$ is a strong Lefschetz element for $M\otimes_kk[y]/(y^m)$ for all $m\geq 0$.
\end{theorem}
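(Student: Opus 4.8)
The plan is to run the argument through the Jordan decomposition of multiplication by a linear form, that is, the decomposition into ``strings''. The starting point is the following reformulation, valid for any finite-length graded module $N$ and any linear form $\ell'$: decompose $N$ as a $k[\ell']$-module into Jordan strings and record each string by the interval of degrees it occupies. Then the rank of $\cdot(\ell')^{d}\colon N_j\to N_{j+d}$ equals the number of these intervals that contain $[j,j+d]$; since such an interval automatically contains both $j$ and $j+d$, this rank is at most $\min(\dim_k N_j,\dim_k N_{j+d})$, and one checks that equality holds for all $j,d$ --- i.e.\ $\ell'$ is a strong Lefschetz element for $N$ --- if and only if the family of string-intervals is totally ordered by inclusion. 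Applying this to $N=M$ and $\ell'=\ell$, the hypothesis that $\ell$ is a strong Lefschetz element for $M$ says precisely that the $k[\ell]$-strings of $M$, occupying intervals $[b_1,T_1]\supseteq\cdots\supseteq[b_r,T_r]$, form such a chain; since the Jordan partition is then forced by the Hilbert function, this chain is in fact determined by $(h_i)$ via $h_i=\#\{\alpha:b_\alpha\le i\le T_\alpha\}$.

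Next I would identify the strings of $\ell+y$ on $N=M\otimes_k k[y]/(y^m)$. Restricting scalars to $k[\ell,y]$, we have $M\otimes_k k[y]/(y^m)=\bigoplus_{\alpha=1}^{r}\bigl(S_\alpha\otimes_k k[y]/(y^m)\bigr)$, where $S_\alpha$ is the $\alpha$-th string; each summand is, after a degree shift, the two-variable monomial complete intersection $k[u,y]/(u^{e_\alpha},y^m)$ with $e_\alpha=T_\alpha-b_\alpha+1$, and $\ell+y$ acts on it as $u+y$. By the two-variable case of the classical theorem of Stanley and Watanabe (we are in characteristic zero), $u+y$ is a strong Lefschetz element for this complete intersection, so its strings are the symmetric ``staircase'' of blocks about the middle degree of the summand. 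Reassembling, the family of string-intervals of $\ell+y$ on $M\otimes_k k[y]/(y^m)$ is exactly
\[
[\,b_\alpha+t,\ T_\alpha+m-1-t\,],\qquad 1\le\alpha\le r,\quad 0\le t\le\min(e_\alpha,m)-1.
\]

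Combining the two steps, $\ell+y$ is a strong Lefschetz element for $M\otimes_k k[y]/(y^m)$ if and only if this family is a chain under inclusion. Intervals coming from the same $\alpha$ are concentric, hence comparable; a pair coming from $\alpha\neq\alpha'$ fails to be comparable, for a suitable $m$ and suitable admissible $t,t'$, exactly when $\bigl|(b_\alpha+T_\alpha)-(b_{\alpha'}+T_{\alpha'})\bigr|\ge 2$, and enlarging $m$ only widens the admissible ranges of $t,t'$, so such a crossing is present for some $m$ iff it is present for all large $m$. Therefore $\ell+y$ is a strong Lefschetz element for $M\otimes_k k[y]/(y^m)$ for every $m\ge 0$ if and only if the integers $b_\alpha+T_\alpha$ take at most two consecutive values as $\alpha$ ranges over the strings of $M$; equivalently, the string-intervals of $M$ are all ``centred within $\tfrac12$'' of one another.

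It remains to match this centring condition with $\HS(M;t)\in\mathcal{H}$. Using that the chain $[b_1,T_1]\supseteq\cdots\supseteq[b_r,T_r]$ is recoverable from $(h_i)$, and letting $D$ be the socle degree, I would split the strings according to whether $b_\alpha+T_\alpha$ is the smaller or the larger of the two admissible values and then compare $h_i$ with $h_{D-i}$ degree by degree (with $h_k=0$ for $k<0$ or $k>D$): the case that all $b_\alpha+T_\alpha$ lie in $\{D,D+1\}$ should unwind to the inequalities \eqref{eq:second}, the case that they all lie in $\{D-1,D\}$ to \eqref{eq:First}, and the disjunction of the two cases is exactly $\HS(M;t)\in\mathcal{H}$. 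I expect this final translation to be the crux of the argument: one has to track the boundary indices carefully and keep the two cases from interfering, and one must confirm that the Hilbert function genuinely pins down the string-intervals (which is the one place the hypothesis that $\ell$ is a strong Lefschetz element for $M$ gets used).
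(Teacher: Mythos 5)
First, a point of comparison: the paper does not prove this statement at all --- it is quoted from Lindsey \cite{Lindsey} and used as a black box --- so there is no internal proof to measure your argument against. Your Jordan-string strategy is the natural route to results of this type and is close in spirit to Lindsey's own approach. Steps 1--3 of your outline are correct: a linear form is a strong Lefschetz element for a finite-length graded module exactly when the degree-intervals of its Jordan strings form a chain under inclusion; in characteristic zero the strings of $\ell+y$ on $M\otimes_k k[y]/(y^m)$ are the intervals $[b_\alpha+t,\ T_\alpha+m-1-t]$ you list, by Stanley--Watanabe applied to each $k[u,y]/(u^{e_\alpha},y^m)$; and requiring these to form a chain for every $m$ is equivalent to the integers $b_\alpha+T_\alpha$ taking at most two consecutive values (your parity bookkeeping is right, since $b_\alpha+T_\alpha$ and $e_\alpha$ have opposite parities).

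The gap is in the step you yourself flag as the crux, and it is not merely unfinished: the claimed equivalence between your centring condition and membership in $\mathcal{H}$ \emph{as defined above} is false, because \eqref{eq:First} and \eqref{eq:second} omit the boundary case $i=0$. Concretely, take $M=k[u]/(u^5)\oplus\bigl(k[u]/(u^4)\bigr)(-1)\oplus\bigl(k[u]/(u^2)\bigr)(-1)$ over $R=k[u]$ with $\ell=u$. Its string-intervals $[0,4]\supseteq[1,4]\supseteq[1,2]$ form a chain, so $\ell$ is a strong Lefschetz element, and the Hilbert function is $(1,3,3,2,2)$ with $D=4$, which satisfies \eqref{eq:First} (namely $1\le 2\le 3$ for $i=1$ and $3\le 3\le 3$ for $i=2$) and hence lies in $\mathcal{H}$ as stated. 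But the sums $b_\alpha+T_\alpha$ are $4,5,3$ --- three consecutive values --- and indeed for $m=2$ the strings of $u+y$ on $M\otimes_k k[y]/(y^2)$ are $[0,5],[1,4],[1,5],[2,4],[1,3],[2,2]$, among which $[2,4]$ and $[1,3]$ are incomparable; one checks directly that $(u+y)^3:N_1\to N_4$ has rank $3<4=\min(\dim N_1,\dim N_4)$. So your reduction proves the theorem only for the corrected class in which \eqref{eq:First} is supplemented by $h_D\le h_0$ and \eqref{eq:second} by $h_0\le h_D$ (equivalently, the index range is $0\le i\le\lfloor D/2\rfloor$); with those boundary inequalities included, your Case A/Case B translation does go through in both directions, and that is presumably the class Lindsey actually works with. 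This discrepancy does not endanger the paper's application, since a mid-heavy sequence satisfies the extra inequality $h_D\le h_0$ (the proof of Lemma \ref{mid_to_H} derives the $i=0$ case en route), but as a proof of the theorem as literally stated your argument cannot be completed.
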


When trying to determine if a sequence is mid-heavy or not, the following preservation result can be useful.

\begin{lemma}
\label{preservation}
Let $(a_k)_{k=0}^n$ be a mid-heavy sequence and let $p(t)$ be the corresponding generating function. Then $(1+t)p(t)$ is also a generating function for a mid-heavy sequence.
\end{lemma}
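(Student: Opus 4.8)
The plan is to analyse the coefficient sequence $b_k = a_k + a_{k-1}$ of $(1+t)p(t)$. It is positive on $\{0,1,\dots,n+1\}$ and zero elsewhere, so it is a legitimate candidate for a mid-heavy sequence with length parameter $n+1$, and we must verify the two mid-heavy implications for every pair $p<q$ with $\{p,q\}\cap[0,n+1]\neq\emptyset$. The first step is to record two unconditional consequences of mid-heaviness of $(a_k)$: if $u<v$ with $\{u,v\}\cap[0,n]\neq\emptyset$ and $u+v\le n-1$, then $a_u<a_v$; and if instead $u+v\ge n+1$, then $a_u>a_v$. The first of these is proved by starting from the boundary pair $(-1,s+1)$ on the antidiagonal $u+v=s\le n-1$, where $a_{-1}=0<a_{s+1}$ because $a$ is positive on $[0,n]$, and then applying the contrapositive form of the mid-heavy inequality repeatedly to move inward along the antidiagonal. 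The second statement follows from the first applied to the reflected sequence $k\mapsto a_{n-k}$, which is again mid-heavy (a short direct check).

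Next I would fix $p<q$ and split on the value of $p+q$. If $p+q\le n$, I claim $b_p<b_q$: from the identity $b_q-b_p=(a_q-a_{p-1})+(a_{q-1}-a_p)$ the two cross pairs $(p-1,q)$ and $(p,q-1)$ both have index sum $p+q-1\le n-1$, so the first step makes each summand positive (if $q=p+1$ the second summand is $0$ and the first already suffices; if $p<0$ then $b_p=0<b_q$ directly). Since $(p-1)+(q+1)=p+q\le n$ as well, the same bound gives $b_{p-1}<b_{q+1}$, or both sides vanish; hence the mid-heavy implication with $\le$ holds and the one with $\ge$ is vacuous. The case $p+q\ge n+2$ is symmetric to this one under the reflection above, and yields $b_p>b_q$.

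The remaining case $p+q=n+1$ is the heart of the argument, since here neither $b_p\le b_q$ nor $b_p\ge b_q$ is forced. The point is that the index sums $(p-1)+q$, $p+(q-1)$ and $(p-2)+(q+1)$ all equal $n$, so the whole question lives on the single middle antidiagonal of $a$. Writing $\delta_u=a_{n-u}-a_u$, a one-line computation yields
\[
b_q-b_p=\delta_{p-1}+\delta_p,\qquad b_{q+1}-b_{p-1}=\delta_{p-2}+\delta_{p-1},
\]
and this remains correct when $q=p+1$ because then $\delta_p=0$. Applying mid-heaviness of $a$ to the pairs $(u,n-u)$ on this antidiagonal gives the sign-monotonicity $\delta_u\ge 0\Rightarrow\delta_{u-1}\ge 0$ and $\delta_u\le 0\Rightarrow\delta_{u-1}\le 0$ for the relevant small indices $u$ (indices falling outside $[0,n]$ contribute $\delta=0$ and are harmless). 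From this: if $\delta_{p-1}+\delta_p\ge 0$ then necessarily $\delta_{p-1}\ge 0$, for otherwise $\delta_p>0$ would force $\delta_{p-1}\ge 0$, a contradiction; hence $\delta_{p-2}\ge 0$ and $b_{q+1}-b_{p-1}\ge 0$. The inequality $\delta_{p-1}+\delta_p\le 0$ is handled identically. These are exactly the two mid-heavy implications for $b$ on this antidiagonal, so $b$ is mid-heavy.

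The genuine obstacle is this last case: the other two reduce to unconditional monotonicity of $b$, but on the middle antidiagonal one must actually use the hypothesis, and the workable framing is to express both $b_q-b_p$ and $b_{q+1}-b_{p-1}$ as sums of two consecutive imbalances $\delta_u$ of the original sequence, which collapses the statement to the one-dimensional sign behaviour of $(\delta_u)$. The only other thing to be careful about is the bookkeeping for indices lying outside $[0,n]$ or $[0,n+1]$, where the relevant values are $0$ and every inequality in sight holds trivially.
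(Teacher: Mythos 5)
Your proof is correct, but it takes a genuinely different route from the paper's. You first extract a structural consequence of mid-heaviness, namely strict monotonicity across antidiagonals: $a_u<a_v$ whenever $u<v$ and $u+v\le n-1$, and $a_u>a_v$ whenever $u+v\ge n+1$ (obtained by reflection, which indeed preserves mid-heaviness). This makes every pair $p<q$ for $b_k=a_k+a_{k-1}$ with $p+q\neq n+1$ unconditional, and isolates the real content on the middle antidiagonal $p+q=n+1$, where you pass to the imbalances $\delta_u=a_{n-u}-a_u$ and use their sign propagation $\delta_u\ge 0\Rightarrow\delta_{u-1}\ge 0$ (and its reverse) to carry $b_p\le b_q$ to $b_{p-1}\le b_{q+1}$. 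The paper instead argues directly on an arbitrary pair $i<j$ and splits only on adjacency: for $j=i+1$ the sum inequality is literally $a_{i-1}\le a_{i+1}$, and for $j>i+1$ it is shown, using mid-heaviness itself, to be \emph{equivalent} to the single inequality $a_i\le a_{j-1}$; one or two further applications of the definition then finish. The paper's argument is shorter and needs no boundary bookkeeping; yours buys a sharper picture of mid-heavy sequences (strictly increasing up to the middle antidiagonal, strictly decreasing beyond it), at the cost of more case analysis at out-of-range indices. If you write this up, make two small points explicit: the sign propagation at $u=n/2$ (for $n$ even) is not an instance of the definition, which requires strictly distinct indices -- your argument only invokes it when $\delta_p>0$, which forces $p<n/2$, so nothing breaks, but this should be said; and in the off-middle cases the conclusion pair may have both entries outside $[0,n+1]$, where the needed inequality degenerates to $0\le 0$ rather than a strict one, exactly the ``both sides vanish'' situation you flag.
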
 

\begin{proof}
The sequence associated to $(1+t)p(t)$ is $(a_{k-1} + a_k)_{k=0}^{n+1}$ where we recall that $a_k=0$ for $k<0$ and $k>n$. So given that $(a_{i-1} + a_i)\leq (a_{j-1} + a_j)$ for some $i<j$, we want to determine that $(a_{i-2} + a_{i-1})\leq (a_{j} + a_{j+1})$. 
Assume first that $j=i+1$. Then $(a_{i-1} + a_i)\leq (a_{j-1} + a_j)=(a_i + a_{i+1})$ is equivalent to $a_{i-1}\leq a_{i+1}$. Using that $(a_k)_{k=0}^n$ is mid-heavy, this gives that $a_{i-2}\leq a_{i+2}$, and thus 
\[
(a_{i-2} + a_{i-1})\leq (a_{i+1} + a_{i+2}) = (a_{j} + a_{j+1})
\]
holds. 
Next, assume that $j>i+1$. Then we claim that $(a_{i-1} + a_i)\leq (a_{j-1} + a_j)$ is equivalent to $a_i \leq a_{j-1}$. Indeed, if $a_i \leq a_{j-1}$, then also $a_{i-1}\leq a_j$ and the inequality is true. If on the other hand $a_i> a_{j-1}$, then $a_{i-1}\geq a_j$ and $(a_{i-1} + a_i)> (a_{j-1} + a_j)$, which is not the case and hence proves the claim. So given that $a_i \leq a_{j-1}$, we get $a_{i-1}\leq a_j$ from the mid-heavy property, which in turn gives that $a_{i-2}\leq a_{j+1}$. Hence
\[
(a_{i-2} + a_{i-1})\leq (a_{j} + a_{j+1}),
\]
which is our desired inequality. The case of the reverse inequalities is proved in exactly the same way, so we are done.
\end{proof}

Let us now use the machinery of mid-heavy sequences to gain information about our algebras $A=R/I$ for $I=(x_1^2,\dots, x_n^2)+\mathrm{RLex}(x_ix_j)$. 

\begin{corollary}
\label{cor:mid_heavy}
The coefficients of $\HS(A;t)$ form a mid-heavy sequence.
\end{corollary}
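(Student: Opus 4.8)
The plan is to reduce the statement to a short base case using the two preceding results. By Theorem~\ref{the:Hilbert series} we have $\HS(A;t) = (1+t)^{n-j}\bigl(1 + jt + (j-i-1)t^2\bigr)$, so $\HS(A;t)$ is obtained from the polynomial $p(t) = 1 + jt + (j-i-1)t^2$ by multiplying by $(1+t)$ exactly $n-j$ times. By Lemma~\ref{preservation} it therefore suffices to show that the coefficient sequence of $p(t)$ is mid-heavy, and then apply that lemma $n-j$ times.

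To handle this base case I would split according to whether $j-i-1$ vanishes. If $j-i-1 = 0$ the relevant sequence is $(1,j)$; if $j-i-1 \geq 1$ it is $(1,j,j-i-1)$. In either case every entry is a positive integer, since $1 \le i < j$ forces $j \ge 2$ (and $j \ge 3$ in the second case), so the definition of mid-heavy applies; moreover the sequence is unimodal with its unique maximum $j$ in position $1$, which is exactly the configuration that notion is meant to capture. I would then verify the defining implications directly, using that mid-heaviness can fail for a pair of indices $p<q$ (with at least one of $p,q$ in $[0,n]$) only if $a_p\le a_q$ while $a_{p-1}>a_{q+1}$, or $a_p\ge a_q$ while $a_{p-1}<a_{q+1}$. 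In the first situation $a_{p-1}>0$ forces $p\in\{1,2\}$ (for larger $p$ both indices fall outside $[0,n]$), but then $a_p\in\{j,\,j-i-1\}$ would have to be $\le a_q$ with $a_{q+1}=0$, i.e. $\le 0$, which is impossible since $j\ge 3$ and $j-i-1\ge 1$ in the only cases that arise. The second situation is symmetric: $a_{q+1}>0$ forces $q\in\{0,1\}$, and then $a_p\ge a_q\in\{1,j\}$ with $p<q$ and $a_p$ equal to $0$ or $1$ again fails. Hence both base sequences are mid-heavy, and Lemma~\ref{preservation} finishes the proof.

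There is no real obstacle here; the only thing requiring care is the bookkeeping around the boundary conventions. The hypothesis that one of the two indices lies in $[0,n]$ is genuinely needed (the example $k[x]/(x^3)$ after Lemma~\ref{mid_to_H} shows a symmetric unimodal sequence can fail to be mid-heavy without it), and the subcase $j-i-1=1$ deserves a separate glance since then $a_0=a_2$, so for the pair $p=0,\,q=2$ both implications in the definition must be checked at once (they hold because $a_{-1}=a_3=0$).
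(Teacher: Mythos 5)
Your proposal is correct and follows exactly the paper's route: factor $\HS(A;t)$ via Theorem~\ref{the:Hilbert series}, apply Lemma~\ref{preservation} a total of $n-j$ times, and check that $1,\,j,\,j-i-1$ is mid-heavy. The only difference is that you spell out the base-case verification (including the degenerate case $j-i-1=0$, which the paper glosses over), which the paper simply asserts.
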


\begin{proof}
As $\HS(A;t) = (1+t)^{n-j}(1 + jt + (j-i-1)t^2)$ by Theorem \ref{the:Hilbert series}, the result follows from Lemma \ref{preservation} and the fact that $1, j, (j-i-1)$ is a mid-heavy sequence.
\end{proof}

\begin{theorem}
\label{thm:pf1}
Let $I=(x_1^2,\dots, x_n^2)+\mathrm{RLex}(x_ix_j)$ where $1\leq i < j \leq n$ and let $k$ be a field of characteristic zero. Then $A=R/I$ has the strong Lefschetz property.
\end{theorem}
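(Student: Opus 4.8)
The plan is to induct on $n-j$, peeling off one variable at a time. The base case $n-j=0$, i.e.\ $j=n$, is already settled: Lemma~\ref{lem:j=n case} shows that $A=R/I$ has the SLP (in fact over any field).

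For the inductive step assume $j<n$. The first step is to observe that no minimal generator of $\mathrm{RLex}(x_ix_j)$ is divisible by $x_n$: a square-free quadratic monomial $x_ax_b$ with $a<b$ can be $\geq x_ix_j$ in the reverse lexicographic order only when $b\leq j$, so every generator of $\mathrm{RLex}(x_ix_j)$ already lies in $\bar R:=k[x_1,\dots,x_{n-1}]$. Setting $\bar I:=(x_1^2,\dots,x_{n-1}^2)+\mathrm{RLex}(x_ix_j)\subseteq\bar R$, this yields a graded isomorphism
\[
A \;=\; R/I \;\cong\; (\bar R/\bar I)\otimes_k k[x_n]/(x_n^2).
\]
Write $\bar A:=\bar R/\bar I$. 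By the inductive hypothesis $\bar A$ has the SLP; fix a strong Lefschetz element $\bar\ell\in\bar R_1$ for it. Since $\bar A$ is an algebra of exactly the form covered by Theorem~\ref{the:Hilbert series} and Corollary~\ref{cor:mid_heavy} (same $i$ and $j$, one fewer variable), the coefficients of $\HS(\bar A;t)$ form a mid-heavy sequence, hence $\HS(\bar A;t)$ lies in the class $\mathcal{H}$ by Lemma~\ref{mid_to_H}. Now apply Theorem~\ref{thm:Lindsey} to $\bar A$ with the element $\bar\ell$ and $m=2$: it gives that $\bar\ell+y$ is a strong Lefschetz element for $\bar A\otimes_k k[y]/(y^2)$. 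Under the identification $y\leftrightarrow x_n$ this algebra is $A$, so $A$ has the SLP, and the induction closes. Unwinding the recursion from the element $x_1+\cdots+x_j$ provided by Lemma~\ref{lem:j=n case}, one sees that $x_1+\cdots+x_n$ serves as a strong Lefschetz element for $A$.

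The only genuinely delicate point is the tensor factorization above, which rests entirely on the fact that $\mathrm{RLex}(x_ix_j)$ involves only the variables $x_1,\dots,x_j$; the rest is bookkeeping together with the two imported ingredients, the mid-heaviness from Corollary~\ref{cor:mid_heavy} and Lindsey's Theorem~\ref{thm:Lindsey}. Note that the characteristic-zero hypothesis enters exactly once, through Theorem~\ref{thm:Lindsey}.
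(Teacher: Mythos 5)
Your proof is correct and takes essentially the same route as the paper's: peel off $x_n$ via the tensor decomposition $A\cong\bar A\otimes_k k[x_n]/(x_n^2)$, invoke Corollary~\ref{cor:mid_heavy} and Lemma~\ref{mid_to_H} to place $\HS(\bar A;t)$ in the class $\mathcal{H}$, and conclude with Theorem~\ref{thm:Lindsey}. The only cosmetic differences are that you index the induction by $n-j$ rather than $n$ and spell out the (correct) revlex argument for why no generator of $\mathrm{RLex}(x_ix_j)$ involves $x_n$.
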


\begin{proof}
We know that $A$ has the SLP when $n\leq 2$ by \cite[Proposition 4.4.]{Codim_2} as we work over a field of characteristic zero, as well as when $j=n$ by Lemma \ref{lem:j=n case}, so assume we are in the case $j<n$ with $n\geq 3$. We will prove the statement by induction on $n$. Recall that for $I=(x_1^2,\dots, x_n^2) + \mathrm{RLex}(x_ix_j)$, we have by definition that $\mathrm{RLex}(x_ix_j)$ is the ideal generated by all square-free monomials greater than or equal to $x_ix_j$ in the reverse lexicographic order. 
In particular, $I$ does not include any monomial of the form $x_kx_n$ for $k\neq n$. Now set $\bar{R}=k[x_1,\dots, x_{n-1}]$, $\bar{I}=(x_1^2,\dots, x_{n-1}^2) + \mathrm{RLex}(x_ix_j)$ and $\bar{A}=\bar{R}/\bar{I}$. By induction, we can assume that $\bar{A}$ has the SLP with strong Lefschetz element $\bar{\ell}=x_1+\cdots + x_{n-1}$. Now we can use that we know some of the structure of $\bar{A}$. By Corollary \ref{cor:mid_heavy}, the coefficients of its Hilbert function form a mid-heavy sequence. Lemma \ref{mid_to_H} then says that the Hilbert series is in the class $\mathcal{H}$. Finally, as we are assumed to be working over a field of characteristic zero, we can apply Theorem \ref{thm:Lindsey} to get that
\[
\bar{A}\otimes_kk[x_n]/(x_n^2) \cong A
\]
also has the SLP with strong Lefschetz element $\ell=x_1+\cdots + x_{n-1} + x_n$ as was to be shown.
\end{proof}

In \cite{Forcing_WLP}, Altafi and Lundqvist give sharp bounds on the number of minimal generators of an artinian ideal $I$ generated in a single degree $d$ which forces it to have the WLP. In the quadratic case $d=2$, we can now answer the reverse question, is there any bound on the number of minimal generators which forces $I$ to not have the WLP, in the negative.

\begin{corollary}
Let $R=k[x_1,\dots, x_n]$ where $k$ is a field of characteristic zero. Then for any $\mu \in \left[n,\binom{n+1}{2}\right]$, 
there is an artinian ideal $I$ minimally generated by $\mu$ quadratic monomials such that $R/I$ has the SLP.
\end{corollary}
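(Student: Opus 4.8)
The plan is a minimal-generator count. Each ideal $I=(x_1^2,\dots,x_n^2)+\mathrm{RLex}(x_ix_j)$ with $1\leq i<j\leq n$ is artinian, since it contains $(x_1^2,\dots,x_n^2)$, and $R/I$ has the SLP over a field of characteristic zero by Theorem~\ref{thm:pf1}. So the corollary reduces to identifying which values of $\mu$ arise as the number of minimal generators of such an $I$, plus handling one extra value.

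First I would count the minimal generators of $I$. All $n$ pure squares $x_1^2,\dots,x_n^2$ are minimal generators, since no square-free monomial divides a pure square; and every square-free quadratic generator of $\mathrm{RLex}(x_ix_j)$ is also minimal, being square-free and of degree $2$. By the graph description recalled in the introduction — $\mathrm{RLex}(x_ix_j)$ is the edge ideal of the complete graph on $j-1$ vertices with $i$ additional edges joining a $j$-th vertex to $i$ of its vertices — there are exactly $\binom{j-1}{2}+i$ such generators. (One can also see this directly: $x_ax_b\geq x_ix_j$ in the reverse lexicographic order precisely when either $b\leq j-1$, giving the $\binom{j-1}{2}$ square-free quadrics in $x_1,\dots,x_{j-1}$, or $b=j$ and $a\leq i$.) Hence $I$ is minimally generated by $\mu=n+\binom{j-1}{2}+i$ quadratic monomials.

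Next I would check that $\binom{j-1}{2}+i$ takes every value in $\{1,2,\dots,\binom{n}{2}\}$ as $(i,j)$ ranges over $1\leq i<j\leq n$. For a fixed $j$, letting $i$ run from $1$ to $j-1$ yields the consecutive integers $\binom{j-1}{2}+1,\dots,\binom{j-1}{2}+(j-1)=\binom{j}{2}$; and the top value $\binom{j}{2}$ for this $j$ is exactly one less than the bottom value $\binom{j}{2}+1$ for $j+1$. So these blocks tile $\{1,\dots,\binom{n}{2}\}$ without gaps, and since $n+\binom{n}{2}=\binom{n+1}{2}$, every $\mu\in\{n+1,\dots,\binom{n+1}{2}\}$ is realized by an ideal in our family — hence by one defining an algebra with the SLP. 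For the remaining value $\mu=n$, one takes the monomial complete intersection $I=(x_1^2,\dots,x_n^2)$: it is minimally generated by $n$ quadrics and has the SLP in characteristic zero by the classical theorem of Stanley and Watanabe. Together these exhaust the interval $[n,\binom{n+1}{2}]$.

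There is no substantial obstacle here; the only points requiring a little care are verifying that the listed generators are genuinely minimal (immediate from the monomial structure) and the bookkeeping showing that the blocks $\{\binom{j-1}{2}+1,\dots,\binom{j}{2}\}$, for $2\leq j\leq n$, fit together to give exactly $\{1,\dots,\binom{n}{2}\}$.
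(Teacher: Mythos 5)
Your proof is correct and follows essentially the same route as the paper: handle $\mu=n$ via the monomial complete intersection, and realize every $\mu\in\{n+1,\dots,\binom{n+1}{2}\}$ by choosing $(i,j)$ so that $(x_1^2,\dots,x_n^2)+\mathrm{RLex}(x_ix_j)$ has $\mu$ minimal generators, then invoke Theorem~\ref{thm:pf1}. The only difference is that you make explicit the count $\mu=n+\binom{j-1}{2}+i$ and the tiling of $\{1,\dots,\binom{n}{2}\}$ by the blocks indexed by $j$, whereas the paper dispatches this in one sentence by appealing to the reverse lexicographic order being a total order.
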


\begin{proof}
If $\mu=n$ we can take $I$ to be a monomial complete intersection. Else, take $I=(x_1^2,\dots, x_n^2) + \mathrm{RLex}(x_ix_j)$ where $i,j$ is chosen such that it has $\mu$ generators. This can always be done as the reverse lexicographic order is a total order on all monomials. Then $R/I$ has the SLP by Theorem \ref{thm:pf1}.
\end{proof}

\section{The shape of its multiplication matrices}

Having proved that all our ideals do enjoy the strong Lefschetz property when the characteristic of our field $k$ is zero, we want to extend this results to fields of positive characteristic. Using methods from \cite{New_Stanley}, similar to those used earlier in \cite{Watanabe_Boolean}, and our already established results about their Hilbert series, we can prove that it still has SLP, at least if the characteristic is large enough. But first, let us set some notation, inspired by \cite{New_Stanley}. \\

As before, we are looking at $A=R/I$ where $I=(x_1^2,\dots, x_n^2)+\mathrm{RLex}(x_ix_j)$. Here we moreover assume that we are in the interesting case when $j<n$ with $n\geq 3$. Begin by denoting $B_k$ the set of monomials of degree $k$ not divisible by any monomial in $I$, so $B_k$ is a basis for $A_k$. Next, as in the proof of Theorem \ref{thm:pf1}, set $\bar{R}=k[x_1,\dots, x_{n-1}]$, $\bar{I}=(x_1^2,\dots, x_{n-1}^2) + \mathrm{RLex}(x_ix_j)$ and $\bar{A}=\bar{R}/\bar{I}$. If $\bar{B}_k$ is the corresponding basis for $\bar{A}_k$, then we get a decomposition
\[
B_k = \bar{B}_k \sqcup x_n\bar{B}_{k-1}
\]
where we recall that we still are in the case where $j<n$. Further, let $\ell=x_1+\cdots + x_n$ and $\bar{\ell}=x_1+\cdots + x_{n-1}$. If $\bar{M}_i^t$ denotes the matrix representing $\cdot \bar{\ell}^t:\bar{A}_i \to \bar{A}_{i+t}$ with respect to the described bases, then these can be used to determine the matrices corresponding to multiplication by powers of $\ell$ as well.

\begin{lemma}
\label{lem:partition}
For $i,t>0$, the matrix $M_i^t$ representing $\cdot \ell^t:A_i \to A_{i+t}$ with respect to $B_i$ and $B_{i+t}$ can be written as a block matrix
\[ M_i^t=
\begin{pmatrix}
\bar{M}_i^t & 0\\
t\bar{M}_i^{t-1} & \bar{M}_{i-1}^t
\end{pmatrix}.
\]
\end{lemma}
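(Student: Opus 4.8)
The plan is to compute the action of $\cdot\ell^t$ on the decomposed basis $B_i = \bar B_i \sqcup x_n\bar B_{i-1}$ directly, using $\ell = \bar\ell + x_n$ and the fact that $x_n^2 \in I$. First I would write $\ell^t = (\bar\ell + x_n)^t = \bar\ell^t + t\,\bar\ell^{t-1}x_n$ in $A$, since every term $\binom{t}{k}\bar\ell^{t-k}x_n^k$ with $k\geq 2$ vanishes because $x_n^2 = 0$ in $A$. This is the key algebraic simplification that makes the block structure appear.

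Next I would evaluate on each half of the basis. For a basis monomial $m \in \bar B_i$ (a monomial not involving $x_n$), we get $\ell^t \cdot m = \bar\ell^t m + t\,\bar\ell^{t-1}x_n m$. Here $\bar\ell^t m$ is an element of $\bar A_{i+t}$, expressed in the basis $\bar B_{i+t}$ — this is exactly a column of $\bar M_i^t$, giving the top-left block. The term $t\,\bar\ell^{t-1}x_n m = t\,x_n(\bar\ell^{t-1}m)$ lies in $x_n\bar A_{i+t-1}$; writing $\bar\ell^{t-1}m$ in the basis $\bar B_{i+t-1}$ (a column of $\bar M_i^{t-1}$) and multiplying by $x_n$ lands us in the $x_n\bar B_{i+t-1}$ part of $B_{i+t}$, giving the bottom-left block $t\bar M_i^{t-1}$. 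For a basis monomial of the form $x_n m'$ with $m' \in \bar B_{i-1}$, we compute $\ell^t\cdot x_n m' = x_n\bar\ell^t m' + t\,\bar\ell^{t-1}x_n^2 m' = x_n\bar\ell^t m'$, again because $x_n^2 = 0$. So this contributes only to the $x_n\bar B_{i+t-1}$ part, via the column of $\bar M_{i-1}^t$ expressing $\bar\ell^t m'$ — this is the bottom-right block $\bar M_{i-1}^t$, and the top-right block is $0$.

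One point that needs a small remark is the compatibility of bases: I should note that multiplication by $x_n$ gives a bijection between the monomial basis $\bar B_{k-1}$ of $\bar A_{k-1}$ and the subset $x_n\bar B_{k-1}$ of $B_k$, and that this bijection is exactly how we identified $B_k = \bar B_k \sqcup x_n\bar B_{k-1}$; hence representing an element of $x_n\bar A_{k-1}$ in the $x_n\bar B_{k-1}$-coordinates is literally the same as representing the corresponding element of $\bar A_{k-1}$ in $\bar B_{k-1}$-coordinates. I should also double-check that no monomial $x_n m$ with $m$ a nonzero class in $\bar A_{k-1}$ becomes zero in $A$, which holds precisely because $I$ contains no monomial of the form $x_n x_k$ with $k \neq n$ (this was observed before the lemma, using $j < n$).

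I do not expect a serious obstacle here; the only thing to be careful about is bookkeeping the identification of the lower block's rows and columns with $\bar A$-bases rather than $A$-bases, and making sure the factor of $t$ (rather than $\binom{t}{1}$ with higher corrections) is justified by $x_n^2 = 0$. Once $\ell^t = \bar\ell^t + t\bar\ell^{t-1}x_n$ is established in $A$, the block decomposition is immediate from reading off columns.
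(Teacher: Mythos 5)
Your proposal is correct and follows essentially the same route as the paper: expand $\ell^t = (\bar{\ell}+x_n)^t = \bar{\ell}^t + t\bar{\ell}^{t-1}x_n$ in $A$ using $x_n^2=0$, then read off the blocks from the decomposition $B_k = \bar{B}_k \sqcup x_n\bar{B}_{k-1}$. Your additional remarks on the column-by-column evaluation and the basis identification via multiplication by $x_n$ simply make explicit what the paper leaves implicit.
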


\begin{proof}
We have that
\[
\ell^t = (\bar{\ell} + x_n)^t = \sum_{k=0}^t\binom{t}{k}\bar{\ell}^k x_{n}^{t-k} = \bar{\ell}^t + t\bar{\ell}^{t-1}x_n
\]
in $A$ since $x_n^2=0$ there. The matrix representation then follows from the basis decomposition $B_k = \bar{B}_k \sqcup x_n\bar{B}_{k-1}$.
\end{proof}

To determine the rank of $M_i^t$, we will need a final lemma on ranks of certain block matrices, generalizing a computation done in \cite[Lemma 2.6]{New_Stanley}.

\begin{lemma}
\label{lem:Block}
Let $A,B,P$ be matrices over a field $k$ such that $APB$ is defined and let $\alpha\in k$ be a non-zero element. Then the block matrix
\[ M=
\begin{pmatrix}
AP & 0\\
\alpha P & PB
\end{pmatrix}
\]
satisfies $\mathrm{rank}(M)=\mathrm{rank}(P) + \mathrm{rank}(APB)$.
\end{lemma}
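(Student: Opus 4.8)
The plan is to reduce the rank of $M$ to that of $P$ plus that of $APB$ by performing invertible row and column operations, which preserve rank. The key observation is that the block $\alpha P$ in the lower-left corner, with $\alpha \neq 0$, gives us a full copy of $P$ that we can use to clear out the other blocks. First I would use the lower-left block $\alpha P$ to eliminate the upper-left block $AP$: the upper block row is $\begin{pmatrix} AP & 0\end{pmatrix}$, and since $AP = A \cdot \alpha P \cdot \alpha^{-1} = (\alpha^{-1}A)(\alpha P)$, subtracting $\alpha^{-1}A$ times the lower block row from the upper block row replaces the top-left block $AP$ by $0$ and the top-right block $0$ by $-\alpha^{-1}A \cdot PB = -\alpha^{-1}(APB)$. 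Note this is a legitimate block row operation because $\alpha^{-1}A$ has the right dimensions (the same as $A$), so the product with the lower block row $\begin{pmatrix}\alpha P & PB\end{pmatrix}$ is defined; here I use that $APB$ is defined, which forces the dimensions to match up.

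After this step the matrix has the form
\[
\begin{pmatrix}
0 & -\alpha^{-1}(APB)\\
\alpha P & PB
\end{pmatrix}.
\]
Next I would clear the lower-right block $PB$ using the lower-left block $\alpha P$: since $PB = \alpha^{-1}(\alpha P) \cdot B$... wait, that is a \emph{column} operation. Indeed, subtracting $\alpha^{-1}$ times (the left block column, right-multiplied by $B$) from the right block column replaces the $PB$ in the lower-right by $PB - \alpha^{-1}(\alpha P)B = 0$, and it changes the upper-right block from $-\alpha^{-1}(APB)$ to $-\alpha^{-1}(APB) - \alpha^{-1}(0)B = -\alpha^{-1}(APB)$ (unchanged, since the upper-left block is now $0$). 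This is a valid block column operation since $B$ has dimensions making $(\alpha P) B = \alpha(PB)$ defined. The matrix is now block-diagonal:
\[
\begin{pmatrix}
0 & -\alpha^{-1}(APB)\\
\alpha P & 0
\end{pmatrix},
\]
which, up to swapping block columns and rescaling (operations that do not change rank), has the same rank as $\begin{pmatrix} P & 0 \\ 0 & APB\end{pmatrix}$. Hence $\mathrm{rank}(M) = \mathrm{rank}(P) + \mathrm{rank}(APB)$, as claimed.

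I do not anticipate a serious obstacle here; the only point requiring a little care is bookkeeping of the matrix dimensions so that every block operation I perform is actually well-defined — this is exactly where the hypothesis that $APB$ is defined gets used, since it pins down the shapes of $A$, $P$, $B$ relative to one another. An alternative, perhaps cleaner, write-up would be to exhibit explicit invertible block matrices $U$ and $V$ (unitriangular in block form, built from $\alpha^{-1}A$ and $\alpha^{-1}B$) together with block permutation matrices, and simply verify that $UMV = \begin{pmatrix} \alpha P & 0 \\ 0 & -\alpha^{-1}APB\end{pmatrix}$ by direct block multiplication; then invariance of rank under multiplication by invertible matrices finishes it immediately. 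Either route is routine linear algebra once the elimination idea is in place.
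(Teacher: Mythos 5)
Your proof is correct and is essentially the same argument as the paper's: the paper multiplies $M$ on the left by $\begin{pmatrix} I & -\alpha^{-1}A \\ 0 & I\end{pmatrix}$ and on the right by $\begin{pmatrix} I & \alpha^{-1}B \\ 0 & -I\end{pmatrix}$ to obtain $\begin{pmatrix} 0 & \alpha^{-1}APB \\ \alpha P & 0\end{pmatrix}$, which is exactly your block row and column elimination (up to a sign on the second block column), followed by the same rank-invariance observation. The ``alternative write-up'' you sketch at the end is precisely the proof given in the paper.
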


\begin{proof}
This follows directly from the equality
\[
\begin{pmatrix}
I & -\alpha^{-1}A \\
0 & I
\end{pmatrix}
\begin{pmatrix}
AP & 0\\
\alpha P & PB
\end{pmatrix}
\begin{pmatrix}
I & \alpha^{-1}B\\
0 & -I
\end{pmatrix}
=
\begin{pmatrix}
0 & \alpha^{-1}APB\\
\alpha P & 0
\end{pmatrix}
\]
and the fact that $\mathrm{rank}(\alpha E)=\mathrm{rank}(E)$ for any matrix $E$ as $\alpha$ is invertible.
\end{proof}

We can now get to our main result.

\begin{theorem}
Let $A=R/I$ for $I=(x_1^2,\dots, x_n^2)+\mathrm{RLex}(x_ix_j)$ where $1\leq i<j\leq n$ and $R=k[x_1,\dots, x_n]$ where the characteristic of the field $k$ is greater than the socle degree $D$ of $A$; in particular it can be taken as anything greater than $n$. Then $A$ has the strong Lefschetz property.
\end{theorem}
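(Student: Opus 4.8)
The plan is to run the same induction on $n$ as in the proof of Theorem~\ref{thm:pf1}, but to replace the appeal to Lindsey's Theorem~\ref{thm:Lindsey} (unavailable in positive characteristic) by a direct computation of the ranks of the multiplication matrices $M_i^t$ using Lemmas~\ref{lem:partition} and~\ref{lem:Block}. When $j=n$, Lemma~\ref{lem:j=n case} gives the SLP with Lefschetz element $\ell=x_1+\cdots+x_n$ (the hypothesis $\mathrm{char}(k)>D$ forces $\mathrm{char}(k)\neq 2$, which is all that is needed there beyond working over a general field), so this is the base case. Now assume $j<n$, so that $n\ge 3$; set $\bar R=k[x_1,\dots,x_{n-1}]$, $\bar I=(x_1^2,\dots,x_{n-1}^2)+\mathrm{RLex}(x_ix_j)$, $\bar A=\bar R/\bar I$, and write $h_k=\dim_k\bar A_k$ with the convention $h_k=0$ for $k<0$ or $k>D-1$. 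Since the socle degree of $\bar A$ is $D-1<D<\mathrm{char}(k)$, the inductive hypothesis applies and $\bar A$ has the SLP with Lefschetz element $\bar\ell=x_1+\cdots+x_{n-1}$. The claim to prove is that $\ell=\bar\ell+x_n$ is a strong Lefschetz element for $A$.

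Fix $i\ge 1$ and $t\ge 1$ (the case $i=0$ amounts to $\ell^t\neq 0$ for $t\le D$, which is immediate from the identity $\ell^t=\bar\ell^t+t\bar\ell^{t-1}x_n$ of Lemma~\ref{lem:partition} together with the SLP of $\bar A$). If $A_i=0$ or $A_{i+t}=0$, then $\cdot\ell^t\colon A_i\to A_{i+t}$ is trivially of full rank; otherwise, since the Hilbert function of $A$ is positive throughout $[0,D]$ by Theorem~\ref{the:Hilbert series}, we get $i<D$ and $1\le t\le D$, so $t$ is invertible in $k$. By Lemma~\ref{lem:partition},
\[
M_i^t=\begin{pmatrix}\bar M_i^t & 0\\ t\bar M_i^{t-1} & \bar M_{i-1}^t\end{pmatrix},
\]
and, using $\bar\ell^t=\bar\ell\cdot\bar\ell^{t-1}=\bar\ell^{t-1}\cdot\bar\ell$, the off-diagonal blocks factor as $\bar M_i^t=\bar M_{i+t-1}^1\bar M_i^{t-1}$ and $\bar M_{i-1}^t=\bar M_i^{t-1}\bar M_{i-1}^1$. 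Applying Lemma~\ref{lem:Block} with $P=\bar M_i^{t-1}$, $A=\bar M_{i+t-1}^1$, $B=\bar M_{i-1}^1$ and $\alpha=t$, and noting that $\bar M_{i+t-1}^1\bar M_i^{t-1}\bar M_{i-1}^1$ represents $\cdot\bar\ell^{t+1}\colon\bar A_{i-1}\to\bar A_{i+t}$, we obtain
\[
\mathrm{rank}(M_i^t)=\mathrm{rank}(\bar M_i^{t-1})+\mathrm{rank}(\bar M_{i-1}^{t+1})=\min(h_i,h_{i+t-1})+\min(h_{i-1},h_{i+t}),
\]
the second equality using that $\bar A$ has the SLP with element $\bar\ell$. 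On the other hand, the decomposition $B_k=\bar B_k\sqcup x_n\bar B_{k-1}$ gives $\dim_k A_i=h_i+h_{i-1}$ and $\dim_k A_{i+t}=h_{i+t}+h_{i+t-1}$, so $M_i^t$ has full rank if and only if
\[
\min(h_i,h_{i+t-1})+\min(h_{i-1},h_{i+t})=\min(h_i+h_{i-1},\,h_{i+t-1}+h_{i+t}).
\]

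It remains to check this identity, and here the results of Section~2 do the work: by Corollary~\ref{cor:mid_heavy} the sequence $(h_k)$ is mid-heavy. If $t=1$, then $i+t-1=i$ and both sides equal $h_i+\min(h_{i-1},h_{i+1})$. If $t\ge 2$, then $i<i+t-1$, and applying the mid-heavy property to this pair of indices splits into two cases: if $h_i\le h_{i+t-1}$, then also $h_{i-1}\le h_{i+t}$ and both sides equal $h_i+h_{i-1}$; if $h_i\ge h_{i+t-1}$, then also $h_{i-1}\ge h_{i+t}$ and both sides equal $h_{i+t-1}+h_{i+t}$. In every case the identity holds, so each $M_i^t$ has full rank and $\ell$ is a strong Lefschetz element for $A$. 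I expect the only delicate points to be bookkeeping: confirming that the composites of multiplication maps line up so that Lemma~\ref{lem:Block} produces exactly $\mathrm{rank}(\bar M_i^{t-1})+\mathrm{rank}(\bar M_{i-1}^{t+1})$, and that $\mathrm{char}(k)>D$ makes invertible precisely the scalar $t$ occurring in Lemma~\ref{lem:partition} over the relevant range $1\le t\le D$; the mid-heavy dichotomy---the conceptual heart, and the reason Section~2 was developed---is then immediate.
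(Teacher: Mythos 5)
Your proposal is correct and follows essentially the same route as the paper: induction on $n$ with Lemma \ref{lem:j=n case} as the base case, the block decomposition of Lemma \ref{lem:partition}, the rank formula of Lemma \ref{lem:Block} with $\alpha=t$ invertible since $\mathrm{char}(k)>D$, and the mid-heavy property of Corollary \ref{cor:mid_heavy} to turn $\min(h_i,h_{i+t-1})+\min(h_{i-1},h_{i+t})$ into $\min(h_i+h_{i-1},h_{i+t-1}+h_{i+t})$. The only cosmetic differences are your explicit case split in the mid-heavy step and your handling of $i=0$ via $\ell^t=\bar\ell^t+t\bar\ell^{t-1}x_n$ instead of the paper's observation that $\ell^t$ is supported on every basis monomial of degree $t$; both are fine.
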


\begin{proof}
This proof will follow the same structure as the proof of SLP for monomial quadratic complete intersections in \cite{New_Stanley}. We already know the statement is true when $j=n$ from Lemma \ref{lem:j=n case}. As those cases cover all of our ideals considered for $n\leq 2$, we may assume that $n\geq 3$ and $1\leq i < j < n$. Moreover, $\cdot \ell^t:A_0 \to A_t$ is injective for any $t\leq D$ as $\ell^t$ is supported at every monomial in degree $t$, and the map is always surjective if $t\geq D$. We are thus left to prove that $\cdot \ell^t:A_i \to A_{i+t}$ has full rank for any $i>0$, $0<t<D$ and $n\geq 3$. We will do this by induction on $n$.

Fix $t$ with $0<t<D$, $i>0$ and $n\geq 3$ and assume $A$ has the SLP for all smaller values of $n$. We then know from Lemma \ref{lem:partition} that the multiplication matrix $M_i^t$ of $\cdot \ell^t:A_i \to A_{i+t}$ can be written as
\[
M_i^t = 
\begin{pmatrix}
\bar{M}_i^t & 0\\
t\bar{M}_i^{t-1} & \bar{M}_{i-1}^t
\end{pmatrix}.
\]
Here we note that $\bar{M}_i^t=\bar{M}_{i+t-1}^1\bar{M}_i^{t-1}$ and $\bar{M}_{i-1}^t =\bar{M}_i^{t-1}\bar{M}_{i-1}^1$ from decomposing $\cdot \bar{\ell}^t:\bar{A}_i \to \bar{A}_{i+t}$ as $\bar{\ell}^t=\bar{\ell}\cdot \bar{\ell}^{t-1}$ and $\cdot \bar{\ell}^t:\bar{A}_{i-1} \to \bar{A}_{i+t-1}$ as $\bar{\ell}^t=\bar{\ell}^{t-1}\cdot \bar{\ell}$. Moreover, since $0<t<D$ and the characteristic of the field is at least $D$, we know that $t$ is non-zero when considered as an element of $k$. Thus $M_i^t$ is a block matrix of the form in Lemma \ref{lem:Block}, so
\[
\mathrm{rank}(M_i^t) = \mathrm{rank}(\bar{M}_i^{t-1}) + \mathrm{rank}(\bar{M}_{i+t-1}^1\bar{M}_i^{t-1}\bar{M}_{i-1}^1).
\]
As we assume that $\bar{A}$ has the SLP by our induction assumption, we know that $\bar{M}_i^{t-1}$ has full rank since it is representing $\cdot \bar{\ell}^{t-1}:\bar{A}_i \to \bar{A}_{i+t-1}$. Similarly, as 
\[
\bar{M}_{i+t-1}^1\bar{M}_i^{t-1}\bar{M}_{i-1}^1 = \bar{M}_{i-1}^{t+1}
\]
is representing $\cdot \bar{\ell}^{t+1}:\bar{A}_{i-1} \to \bar{A}_{i+t}$, it also has full rank. Therefore
\[
\mathrm{rank}(M_i^t) = \min\{|\bar{B}_i|, |\bar{B}_{i+t-1}|\} + \min\{|\bar{B}_{i-1}|, |\bar{B}_{i+t}|\}.
\]
Using that $(|\bar{B}_k|)_{k\in \mathbb{Z}}$ is a mid-heavy sequence from Corollary \ref{cor:mid_heavy}, we finally get
\begin{align*}
\min\{|\bar{B}_i|, |\bar{B}_{i+t-1}|\} + \min\{|\bar{B}_{i-1}|, |\bar{B}_{i+t}|\} &= \min\{|\bar{B}_i| + |\bar{B}_{i-1}|, |\bar{B}_{i+t-1}| + |\bar{B}_{i+t}|\}\\ 
&= \min\{|B_i|,|B_{i+t}|\},
\end{align*}
showing that $M_i^t$ has full rank and that $A$ has the SLP.
\end{proof}

\section{Higher degrees}
Having found an explicit construction for an algebra with the SLP for any number of variables and any possible number of quadratic generators of an artinian ideal in characteristic zero, one may wonder if the same can be done in higher degrees. That is, given $n,d\geq 3$ and some number $\mu \in \left[n,\binom{n+d-1}{d}\right]$, is there an algebra minimally generated by $\mu$ monomials of degree $d$ in $n$ variables with the SLP? We can focus on $n\geq 3$ as all artinian ideals give algebras with the SLP when $n\leq 2$ \cite[Proposition 4.4.]{Codim_2}.  We might start by noting that one possible analogue of our construction from the quadratic case does not work.

\begin{proposition}
Let $R=k[x_1,x_2,x_3]$ for $k$ a field of characteristic zero and let 
\[
I=(x_1^d, x_2^d, x_3^d) + \mathrm{RLex}(x_1x_2^{d-1}) = (x_1,x_2)^d + (x_3^d)
\]
for some integer $d\geq 3$. Then $A=R/I$ does not satisfy the SLP as for $\ell = x_1+\cdots + x_n$, $\cdot \ell^3:A_{d-2} \to A_{d+1}$ fails required injectivity.
\end{proposition}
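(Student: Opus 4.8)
The plan is to show that $\ell=x_1+x_2+x_3$ is not a strong Lefschetz element for $A$, and then invoke the standard fact that for a monomial artinian algebra over a field of characteristic zero the SLP holds if and only if $x_1+\cdots+x_n$ is a strong Lefschetz element. First I would record the Hilbert function of $A$. Since $I=(x_1,x_2)^d+(x_3^d)$ is a monomial ideal, $A$ has basis the monomials $x_1^ax_2^bx_3^c$ with $a+b\le d-1$ and $c\le d-1$; equivalently $A\cong k[x_1,x_2]/(x_1,x_2)^d\otimes_k k[x_3]/(x_3^d)$. Counting monomials gives $\dim_k A_{d-2}=\binom d2$ and $\dim_k A_{d+1}=\binom{d+1}2-3$, and since $\binom{d+1}2-3-\binom d2=d-3\ge 0$ for $d\ge 3$, a \emph{full-rank} map $\cdot\ell^3\colon A_{d-2}\to A_{d+1}$ would necessarily be injective. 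So it suffices to exhibit a nonzero $f\in A_{d-2}$ with $\ell^3 f=0$ in $A$.

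To produce such an $f$, I would pass to the subalgebra $C\subseteq A$ generated by $u:=x_1+x_2$ and $x_3$. Since $(x_1+x_2)^d\in(x_1,x_2)^d\subseteq I$ and $x_3^d\in I$, there is a well-defined surjection $k[u,x_3]/(u^d,x_3^d)\twoheadrightarrow C$, and it is an isomorphism: under the tensor description of $A$, the elements $(x_1+x_2)^i x_3^j$ with $0\le i,j\le d-1$ are linearly independent, because the $(x_1+x_2)^i$ lie in distinct (nonzero) degrees of the first factor and the $x_3^j$ form a basis of the second. Thus $C$ is a monomial complete intersection in the two ``variables'' $u,x_3$, and $\ell=u+x_3\in C$.

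Now a dimension count inside $C$ finishes the argument. The space $C_{d-2}$ has basis $u^{d-2},u^{d-3}x_3,\dots,x_3^{d-2}$, so $\dim_k C_{d-2}=d-1$, whereas $C_{d+1}$ has basis $\{u^i x_3^{\,d+1-i}:2\le i\le d-1\}$, so $\dim_k C_{d+1}=d-2$. Since $d-1>d-2$, \emph{every} linear map $C_{d-2}\to C_{d+1}$ has a nonzero kernel; choose $0\ne f\in C_{d-2}$ with $\ell^3 f=0$ in $C$. Because $C\hookrightarrow A$ is a graded $k$-algebra homomorphism carrying $\ell$ to $\ell$, the image of $f$ in $A_{d-2}$ is nonzero and still satisfies $\ell^3 f=0$ in $A$. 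This contradicts the injectivity of $\cdot\ell^3\colon A_{d-2}\to A_{d+1}$ forced by the Hilbert-function computation, so $\ell$ is not a strong Lefschetz element and hence $A$ fails the SLP.

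The mechanism is simply that the complete-intersection subalgebra $C$ has strictly larger dimension in degree $d-2$ than in degree $d+1$, while $A$ does not; this discrepancy is exactly what destroys injectivity. The two points that need care are the injectivity of $k[u,x_3]/(u^d,x_3^d)\to A$ (handled by the tensor-product description) and the numerical inequality $\binom d2\le\binom{d+1}2-3$ for $d\ge 3$ (which is what lets ``full rank'' be read as ``injective'' here). If one prefers, $f$ can be written down explicitly, for instance $f=3(x_1+x_2)^2-4(x_1+x_2)x_3+3x_3^2$ when $d=4$, but the dimension count makes that unnecessary.
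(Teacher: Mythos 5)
Your argument is correct, and the first half (the Hilbert function computation showing $\HF(A,d+1)-\HF(A,d-2)=d-3\ge 0$, so that full rank forces injectivity) is identical to the paper's. Where you genuinely diverge is in producing the kernel element. The paper also reduces to the two ``variables'' $y_1=x_1+x_2$, $y_2=x_3$, but then writes down an explicit element $f_d=\frac{\partial^2}{\partial y_1\partial y_2}\frac{y_1^{d+1}-(-y_2)^{d+1}}{y_1+y_2}$ and verifies, via a polynomial identity whose proof it omits, that $\ell^3 f_d$ lands in $I$. You instead observe that the subalgebra $C=k[u,x_3]/(u^d,x_3^d)$ with $u=x_1+x_2$ sits inside $A$ (the injectivity check via the tensor decomposition is the one point that needs care, and you handle it), that $\ell=u+x_3\in C$, and that $\dim_k C_{d-2}=d-1>d-2=\dim_k C_{d+1}$, so $\cdot\ell^3\colon C_{d-2}\to C_{d+1}$ must have a nonzero kernel for purely linear-algebraic reasons; since multiplication in $C$ is inherited from $A$, this kernel element works in $A$. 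Your route is nonconstructive but cleaner: it replaces the omitted ``straightforward calculation'' with a dimension count, and it isolates the structural reason for the failure, namely that $A$ contains a monomial complete intersection in two variables whose Hilbert function \emph{drops} from degree $d-2$ to degree $d+1$ while that of $A$ does not. The paper's approach buys an explicit witness for the kernel (useful if one wants to exhibit the element or study it further), at the cost of an unverified identity; yours buys a complete, self-contained proof at the cost of explicitness.
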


\begin{proof}
Recall that it suffices to consider the liner form given by the sum of the variables when examining if a monomial ideal has the WLP or the SLP as first proven in \cite[Proposition 2.2.]{Sum_of_var_WLP} in the case of WLP and later in \cite[Theorem 2.2]{Sum_of_var_SLP} for SLP and any field. We begin by showing that $\cdot \ell^3$ must be injective from degree $d-2$ to degree $d+1$ if it has full rank by establishing that $\mathrm{HF}(A,d+1)\geq \mathrm{HF}(A,d-2)$. Set $J=(x_1,x_2)^d$. Then as
\[
A=k[x_1,x_2]/J \otimes_k k[x_3]/(x_3^d),
\]
we can determine the Hilbert function of $A$ as
\[
\HF(A,i) = \sum_{j=0}^i\HF(k[x_1,x_2]/J,j)\HF(k[x_3]/(x_3^d),i-j).
\]
Here we have that
\[
\HF(k[x_1,x_2]/J,j)= 
\begin{cases}
j+1 & \text{ if } j<d\\
0 & \text{ otherwise}
\end{cases}
\]
and $\HF(k[x_3]/(x_3^d),j)=1$ for $j<d$ and zero otherwise. Using this we find that
\[
\HF(A,d+1) - \HF(A,d-2) = (3+4+\cdots + d) - (1+2+\cdots+(d-1)) = d-3,
\]
which is greater than or equal to zero for all $d\geq 3$, showing that injectivity is required.

Introducing the variables $y_1=x_1+x_2$ and $y_2=x_3$, it then suffices to show the polynomial identity
\[
-(y_1+y_2)^3f_d=(d-1)(y_1^{d+1} - (-y_2)^{d+1}) + (d+1)(y_1^dy_2 + y_1(-y_2)^d)
\]
where
\[
f_d=\frac{\partial^2}{\partial y_1 \partial y_2}\frac{y_1^{d+1} - (-y_2)^{d+1}}{y_1+y_2}.
\]
The proof of this identity is a straightforward calculation and is therefore omitted. This then exhibits the polynomial $f_d\in A_{d-2}$ as a kernel of multiplication by $(y_1+y_2)^3=\ell^3$ since 
\[
(d-1)(y_1^{d+1} - (-y_2)^{d+1}) + (d+1)(y_1^dy_2 + y_1(-y_2)^d)\in I.
\]
\end{proof}

When $n\geq 4$ and $d\geq3$, we do not even have the WLP in general. To show that, we do need the theory of inverse systems. Let $R=k[x_1,\dots, x_n]$ act on $S=k[X_1,\dots, X_n]$ via differentiation, i.e. $x_i\circ F=\frac{\partial F}{\partial X_j}$ for $F\in S$. Then for a homogeneous ideal $I$ in $R$, we denote $I^{-1}=\{F\in S \; | \; g\circ F=0, \text{ for all } g\in I\}$ the set of all forms that are annihilated by everything in $I$. The important part for us is now that $\cdot \ell:(R/I)_i \to (R/I)_{i+1}$ has the same rank as $\circ \ell:(I^{-1})_{i+1} \to (I^{-1})_i$.

\begin{proposition}
Let $R=k[x_1,\dots, x_n]$ for $k$ a field of characteristic zero and consider the ideal 
\[
I=(x_1^d,\dots, x_n^d) + \mathrm{RLex}(x_1x_2^{d-1}) = (x_1,x_2)^d + (x_3^d,\dots, x_n^d)
\]
for $n\geq 4$ and $d\geq 3$. Then $A=R/I$ does not have the WLP.
\end{proposition}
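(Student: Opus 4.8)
The plan is to use the inverse systems dictionary set up just before the statement: to show $A=R/I$ fails the WLP it suffices to exhibit a single degree $i$ where $\cdot\ell\colon A_i\to A_{i+1}$ is neither injective nor surjective, equivalently where $\circ\ell\colon(I^{-1})_{i+1}\to(I^{-1})_i$ drops rank in a way that is forced. Since $I=(x_1,x_2)^d+(x_3^d,\dots,x_n^d)$, the inverse system $I^{-1}$ factors as a tensor product: $(x_3^d,\dots,x_n^d)$ contributes the inverse system of a monomial complete intersection in $X_3,\dots,X_n$, which is spanned by monomials $X_3^{a_3}\cdots X_n^{a_n}$ with each $a_k\le d-1$, while $(x_1,x_2)^d$ contributes $\bigl((x_1,x_2)^d\bigr)^{-1}$ in $X_1,X_2$, which is the span of all forms in $X_1,X_2$ of degree at most $d-1$ (a space of dimension $\binom{d+1}{2}$, with Hilbert function $1,2,\dots,d$). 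So $I^{-1}$ has Hilbert function the convolution of $(1,2,\dots,d)$ with $(1,1,\dots,1)$ ($d$ ones) in $n-2$ variables.

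First I would pin down the degree where the obstruction lives. From the previous proposition's computation, in the $n=3$ case $\mathrm{HF}(A,d-2)\le\mathrm{HF}(A,d+1)$, so there injectivity of $\cdot\ell^3$ was the issue; but for WLP with a single $\ell$ we want a degree $i$ with $\mathrm{HF}(A,i)=\mathrm{HF}(A,i+1)$ or one straddling the peak, and then show the map is not bijective. The cleanest route for $n\ge 4$: locate the symmetric middle of the Hilbert function (the Gorenstein-like peak is flat over the part coming from $k[X_3,\dots,X_n]/(X_3^d,\dots)$ tensored with the peak of $k[X_1,X_2]/(X_1,X_2)^d$) and exhibit an explicit element of $\ker(\cdot\ell)$ there. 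The natural candidate is inherited from the $n=3$ case: the form $f_d=\frac{\partial^2}{\partial Y_1\partial Y_2}\frac{Y_1^{d+1}-(-Y_2)^{d+1}}{Y_1+Y_2}$, now viewed inside $I^{-1}\subset S$ after the substitution $Y_1=X_1+X_2$, $Y_2=X_3$ — actually more robustly, use that $(x_1+x_2)\circ$ and $x_3\circ$ kill a suitable combination, producing a kernel element of $\circ\ell$ in degree $i+1=d-1$ (or the relevant middle degree), while a dimension count $\mathrm{HF}(A,i)<\mathrm{HF}(A,i+1)$ at that spot shows $\cdot\ell$ cannot be surjective either, hence not full rank.

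Concretely, the steps in order: (1) record the tensor decomposition $A\cong k[x_1,x_2]/(x_1,x_2)^d\otimes_k k[x_3,\dots,x_n]/(x_3^d,\dots,x_n^d)$ and compute $\mathrm{HF}(A,i)$ as the stated convolution; (2) use the sum-of-variables reduction (\cite[Theorem 2.2]{Sum_of_var_SLP}) so that only $\ell=x_1+\cdots+x_n$ needs checking; (3) identify a degree $i$ such that $\mathrm{HF}(A,i)<\mathrm{HF}(A,i+1)$ — so surjectivity of $\cdot\ell\colon A_i\to A_{i+1}$ fails automatically — and simultaneously $\mathrm{HF}(A,i-1)<\mathrm{HF}(A,i)$ would be too strong, so instead pick the degree just below the peak where the function is still strictly increasing but an explicit kernel element exists, forcing non-injectivity on the immediately following map; (4) write down the kernel element explicitly via the inverse system, leveraging the $n=3$ identity from the previous proposition applied to the three relevant variables $x_1,x_2,x_3$ (the remaining variables $x_4,\dots,x_n$ only enlarge the Hilbert function without creating new relations among $x_1,x_2,x_3$); (5) conclude that some $\cdot\ell\colon A_i\to A_{i+1}$ has a nontrivial kernel while failing surjectivity, so $A$ does not have the WLP.

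The main obstacle will be step (3)–(4): choosing the right degree $i$ and proving that the candidate form genuinely lies in $I^{-1}$ and in the kernel of $\circ\ell$ in that degree, rather than being killed for trivial degree reasons. One has to be careful that adding the variables $X_4,\dots,X_n$ does not accidentally make $\cdot\ell$ full rank again (it can, since extra variables can repair injectivity); the resolution is that the failure is of surjectivity at a strictly-increasing step combined with a forced drop, or alternatively to run the argument at the unique degree where the convolution Hilbert function is flat (which happens precisely because the $1,2,\dots,d$ sequence convolved with $d$ ones in $\ge 2$ variables develops a plateau around the center for $d\ge 3$) and show $\cdot\ell$ is not injective there by producing the explicit differential-operator kernel element. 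Verifying that plateau and that the kernel element survives is the delicate combinatorial/computational heart of the proof.
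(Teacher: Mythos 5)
There is a genuine gap: your plan never produces the actual witness of WLP failure, and the specific candidate you propose cannot work. The kernel element $f_d$ from the previous proposition is a kernel element of $\cdot\ell^{3}$, i.e.\ it witnesses a failure of the \emph{strong} Lefschetz property for $n=3$; it is not in the kernel of $\cdot\ell$, and indeed for $n=3$ the algebra $k[x_1,x_2,x_3]/\bigl((x_1,x_2)^d+(x_3^d)\bigr)$ does have the WLP in characteristic zero, so there is no single-$\ell$ kernel element there to ``inherit.'' Constructing a genuine degree and element where $\cdot\ell\colon A_i\to A_{i+1}$ fails full rank is exactly the content of the proposition, and your steps (3)--(4) leave it open (you say so yourself). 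The paper's proof supplies this content explicitly: for $n=4$ it shows $\HF(A,2d-3)\geq\HF(A,2d-2)$, so full rank would force surjectivity, and then kills surjectivity via the inverse-system element $F=(X_1-X_2)^{d-1}(X_3-X_4)^{d-1}\in (I^{-1})_{2d-2}$ with $\ell\circ F=0$; for $n=5$ it shows $\cdot\ell\colon A_{2d-2}\to A_{2d-1}$ is neither injective (explicit $f$ built from $\frac{(x_1+x_2)^d-(-x_3)^d}{x_1+x_2+x_3}\cdot\frac{x_4^d-(-x_5)^d}{x_4+x_5}$) nor surjective (another explicit dual form). None of these constructions, or substitutes for them, appear in your proposal.

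The second missing idea is how to handle general $n$. You correctly worry that adding variables can repair full rank, but your proposed resolution (a plateau of the convolved Hilbert function plus a uniform kernel element) is neither verified nor plausible as stated, since the relevant degree and the shape of the kernel element change with $n$. The paper sidesteps this entirely: it proves the cases $n=4$ and $n=5$ only, and then invokes the fact that if $A$ fails the WLP then so does $A\otimes_k k[y_1,y_2]/(y_1^d,y_2^d)$ (the cited special case of \cite[Lemma 7.8]{O-sequence} in \cite[Corollary 3.1]{Forcing_WLP}), which reaches every $n\geq 4$ by parity. Without either this reduction or a uniform-in-$n$ construction, your argument does not close.
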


\begin{proof}
We begin by noting that it suffices to prove the statement for $n=4$ and $n=5$. All other $n$ then follow by a special case of \cite[Lemma 7.8]{O-sequence} given in \cite[Corollary 3.1]{Forcing_WLP} saying that if an artinian graded algebra $A$ fails the WLP, then so does $A\otimes (k[y_1,y_2]/(y_1^d, y_2^d))$. 

Let us consider the case when $n=4$ first. In this case $\cdot \ell$ fails required surjectivity from degree $2d-3$ to degree $2d-2$. Indeed, to see that surjectivity is needed, we can decompose $A$ into a tensor product similar to the previous proposition as
\[
A=k[x_1,x_2]/(x_1,x_2)^d \otimes_k k[x_3,x_4]/(x_3^d, x_4^d)
\]
where each part has a known Hilbert series. 
Using those one finds that
\begin{align*}
\HF(A,2d-3) &= 1\cdot 2 + 2\cdot 3 + 3\cdot 4 + \cdots + (d-2)(d-1) + (d-1)d + d(d-1)\\ 
&= d(d-1) + \sum_{i=1}^{d-1}i(i+1)\\
&=\sum_{i=1}^d i^2 + \frac{d(d-1)}{2} -d
\end{align*}
while
\[
\HF(A,2d-2) = \sum_{i=1}^d i^2.
\]
Hence $\HF(A,2d-3)\geq \HF(A,2d-2)$ as long as 
\[
\frac{d(d-1)}{2} -d \geq 0,
\]
which is true for $d\geq 3$, showing that surjectivity is needed from degree $2d-3$ to $2d-2$. To show that $\cdot \ell:A_{2d-3} \to A_{2d-2}$ is not surjective, we prove that $\circ \ell: (I^{-1})_{2d-2} \to (I^{-1})_{2d-3}$ is not injective. This follows as 
\[
F=(X_1-X_2)^{d-1}(X_3-X_4)^{d-1}
\]
satisfies that $F\in (I^{-1})_{2d-2}$ and $\ell \circ F=0$.

Let us now turn to $n=5$. In this case we will show that $\cdot \ell:A_{2d-2} \to A_{2d-1}$ does not have full rank by showing that it is neither injective nor surjective, bypassing any need for knowledge about the Hilbert series of $A$. In the case of injectivity, we see that if we set \[
f=\frac{(x_1+x_2)^d - (-x_3)^d}{x_1+x_2+x_3}\cdot \frac{x_4^d - (-x_5)^d}{x_4+x_5},
\]
then
\[
\ell f = (x_1+x_2+x_3)f + (x_4+x_5)f\in I.
\]
Finally, to prove that it is not surjective, we see that
\[
F=(X_1-X_2)^{d-1}(X_3-X_4)^{\left\lfloor \frac{d}{3}\right\rfloor}(X_4-X_5)^{\left\lfloor \frac{d}{3}\right\rfloor}(X_5-X_3)^{\left\lfloor \frac{d}{3}\right\rfloor + r}
\]
where $0\leq r \leq 2$ is the remainder when $d$ is divided by $3$, satisfies that $F\in (I^{-1})_{2d-1}$ and $\ell \circ F=0$. So $\cdot \ell:A_{2d-2} \to A_{2d-1}$ does not have full rank and we are done.
\end{proof}

If one were to try different approaches to find families of ideals with the SLP, several seemingly surprising road blocks may also obstruct the search for such algebras.

\begin{example}
Consider the algebra $A$ defined by the ideal 
\[
I=(x_1^3,\dots, x_8^3) + \mathrm{RLex}(x_2x_3^2) = (x_1^3,\dots, x_8^3) + (x_1^2x_2, x_1x_2^2, x_1^2x_3, x_1x_2x_3, x_2^2x_3, x_1x_3^2).
\]
Then $A=k[x_1,\dots, x_8]/I$ has the SLP, but $k[x_1,\dots, x_8]/(I+(m))$ for any monomial $m$ of degree $3$ outside of $I$ does not even have the WLP. In a similar way, the ideal 
\[
J=(x_1^8, x_2^8, x_3^8, x_1^4x_2^2x_3^2, x_1^3x_2^3x_3^2)
\]
has the SLP, but removing either $x_1^4x_2^2x_3^2$ or $x_1^3x_2^3x_3^2$ as a generator, it no longer does.
\end{example}

These examples aside, using the package MaximalRankProperties \cite{MaxRankProperties} in Macaulay2 \cite{M2}, for any admissible number of minimal generators, we have always been able to find a monomial ideal with that number of generators and the SLP. This makes us believe the following conjecture.

\begin{conjecture}
Let $R=k[x_1,\dots, x_n]$ where $k$ is a field of characteristic zero. Fix $n,d\geq 3$ and $\mu$ in the interval $\left[n,\binom{n+d-1}{d}\right]$. Then there is a monomial ideal $I$ minimally generated by $\mu$ monomials of degree $d$ such that $R/I$ has the SLP.
\end{conjecture}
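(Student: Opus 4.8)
We outline a possible route to this conjecture. The plan is to recast it as an interval-filling statement: letting $S_{n,d}\subseteq\{n,n+1,\dots,\binom{n+d-1}{d}\}$ denote the set of $\mu$ for which some monomial ideal minimally generated by $\mu$ monomials of degree $d$ defines a quotient with the SLP, one wants $S_{n,d}$ to be this whole interval. Both extremes are already available: $\mu=n$ is realised by the monomial complete intersection $(x_1^d,\dots,x_n^d)$ via the Stanley--Watanabe theorem \cite{Stanley,Watanabe}, and $\mu=\binom{n+d-1}{d}$ is realised by $I=\mathfrak{m}^d$, whose quotient $R/\mathfrak{m}^d$ satisfies the SLP over any field because in degrees below $d$ the maps $\cdot\ell^t$ are simply the injective, full-rank multiplication maps of the polynomial ring, while the maps landing in degrees $\geq d$ have zero target. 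The content is to produce the intermediate values, and the most optimistic form of the plan is to fix, for each $(n,d)$, an explicit chain
\[
(x_1^d,\dots,x_n^d)=I_n\subset I_{n+1}\subset\cdots\subset I_{\binom{n+d-1}{d}}=\mathfrak{m}^d
\]
in which $I_{\mu+1}$ is obtained from $I_\mu$ by adjoining a single degree-$d$ monomial and every $R/I_\mu$ has the SLP.

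To verify the SLP along such a chain I would extend the block-matrix analysis of Section 3 rather than appeal to Theorem \ref{thm:Lindsey}: the tensor trick only accounts for the single step $\bar{I}\rightsquigarrow\bar{I}+(x_n^d)$ and so cannot reach generator counts exceeding $\binom{n+d-2}{d}$, which means genuinely new generators involving $x_n$ have to be allowed. Writing $\bar{R}=k[x_1,\dots,x_{n-1}]$ and splitting the degree-$k$ monomial basis of $A=R/I$ by its $x_n$-degree, $B_k=\bigsqcup_{s=0}^{d-1}x_n^s\bar{B}_{k-s}$, the matrix of $\cdot\ell^t$ becomes a lower block-triangular $d\times d$ matrix whose diagonal blocks are the multiplication matrices of the $x_n$-free part of $A$ and whose subdiagonal blocks are scaled multiplication matrices twisted by those generators of $I$ divisible by $x_n$. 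A rank computation generalising Lemma \ref{lem:Block} should then reduce the full rank of each $M_i^t$ to the SLP of the $x_n$-free part (available by induction on $n$), to the characteristic exceeding the socle degree, and to a combinatorial inequality comparing minima of partial sums of Hilbert values. That last ingredient forces one first to compute the Hilbert series of the members of the chain and prove it mid-heavy, or better still real-rooted, in the spirit of Corollary \ref{cor:mid_heavy}; keeping this shape under control as generators are added is itself not routine.

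The crux is the base case $n=3$. The two Propositions above show that the naive reverse-lexicographic analogue of the quadratic construction fails there, so one has to discover a new family of staircase monomial ideals in three variables --- most naturally described by their complementary order ideal inside the triangular array of degree-$d$ monomials --- that occurs in every cardinality $\mu\in[3,\binom{d+2}{2}]$ and forces every $\cdot\ell^t$ to have full rank. Candidate shapes would be probed through Macaulay's inverse systems, searching for explicit elements of the kernels and cokernels of the multiplication maps exactly as in those Propositions. The final Example is a real warning that the SLP is fragile under adding or deleting a single generator, so there is no monotonicity to lean on and any such chain must be threaded through a narrow corridor of good ideals; if no single-step chain exists for some $(n,d)$, the fallback is to build separate constructions over disjoint ranges of $\mu$ and glue them using the known preservation of the Lefschetz properties under tensoring with monomial complete intersections. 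Finding the right three-variable family, and then propagating its Hilbert-series properties through the inductive step, is the part I expect to be hard --- which is presumably why this is recorded only as a conjecture.
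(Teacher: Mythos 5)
This statement is a conjecture in the paper --- the author proves it only for $d=2$ and in fact devotes the final section to showing that the obvious degree-$d$ analogue of the quadratic construction \emph{fails} --- so what you have written is a research programme, not a proof, and it leaves the essential gaps open. Concretely: (1) the entire content of the conjecture is the existence of good ideals for the intermediate values of $\mu$, and your proposal produces none beyond the two endpoints $(x_1^d,\dots,x_n^d)$ and $\mathfrak{m}^d$, both of which are already classical; (2) you explicitly defer the base case $n=3$ to a family "to be discovered," which is precisely the missing ingredient; (3) the paper's two propositions show that $\mathrm{RLex}$-type ideals in degree $d\geq 3$ fail the SLP (and for $n\geq 4$ even the WLP), and its final example shows that the SLP is not monotone under adding or deleting a single generator, so there is no evidence that a saturated chain $I_n\subset\cdots\subset I_{\binom{n+d-1}{d}}$ of SLP ideals exists at all --- your own fallback of gluing disjoint ranges is equally unconstructed.

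There is also a technical flaw in the proposed inductive step. The decomposition $B_k=\bigsqcup_{s=0}^{d-1}x_n^s\bar{B}_{k-s}$ is only valid when the unique minimal generator of $I$ involving $x_n$ is $x_n^d$; as soon as $I$ acquires a generator divisible by $x_n$ but different from $x_n^d$ (which, as you note, is unavoidable once $\mu>\binom{n+d-2}{d}+1$), some monomials $x_n^s m$ with $m\in\bar{B}_{k-s}$ lie in $I$, the blocks are no longer rectangular copies of the $\bar{M}_i^t$, and Lemma \ref{lem:Block} does not apply in the form given. Making the "twisted" block-rank computation precise, and controlling the Hilbert series (mid-heaviness in the sense of Corollary \ref{cor:mid_heavy}) of the resulting quotients, is exactly where the difficulty lies. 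Your framing of the problem is sensible and consistent with the paper's own discussion, but as it stands nothing beyond the two trivial values of $\mu$ has been established.
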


At this point, we don't even have a suggestion for how such ideals would look like. One related result is that if the ideal is generated by all but at most two monomials in degree $d$, then it has the SLP \cite[Theorem 2]{Nearly_all}. But that is very far from covering all possible number of generators, so a family of ideals conjectured to have the SLP would be very interesting to find.

\section*{Acknowledgements}
The author would like to thank Samuel Lundqvist and Lisa Nicklasson for comments on an earlier draft of this paper and the anonymous referee for their careful reading and valuable comments.

\bibliographystyle{plain}
\bibliography{FSLP_ref}

\end{document}